\newtheorem{theorem}{Theorem}
\theoremstyle{plain}
\newtheorem{definition}{Definition}
\newtheorem{lemma}{Lemma}
\newtheorem{remark}{Remark}
\numberwithin{equation}{section}
\begin{document}
\title[Inequalities for product of convex functions]{ON THE HADAMARD TYPE
INEQUALITIES INVOLVING PRODUCT OF TWO CONVEX FUNCTIONS ON THE CO-ORDINATES}
\author{$^{\bigstar }$M. Emin \"{O}ZDEM\.{I}R}
\address{$^{\bigstar }$Atat\"{u}rk University, K. K. Education Faculty,
Department of Mathematics, 25240, Kampus, Erzurum, Turkey}
\email{emos@atauni.edu.tr}
\author{$^{\blacklozenge ,\clubsuit }$Ahmet Ocak AKDEM\.{I}R}
\address{$^{\blacklozenge }$A\u{g}r\i\ \.{I}brahim \c{C}e\c{c}en University,%
\\
Faculty of Science and Arts, Department of Mathematics, 04100, A\u{g}r\i ,
Turkey}
\email{ahmetakdemir@agri.edu.tr}
\date{September 2010}
\subjclass{Primary 26D15, Secondary 26A51}
\keywords{Convex, $s-$convex, Co-ordinates, Product of functions, Hadamard's
inequality, Beta function, Gamma function.\\
$^{\clubsuit }$Corresponding author}

\begin{abstract}
In this paper some Hadamard-type inequalities for product of convex
funcitons of $2-$variables on the co-ordinates are given.
\end{abstract}

\maketitle

\section{INTRODUCTION}

The inequality%
\begin{equation}
f\left( \frac{a+b}{2}\right) \leq \frac{1}{b-a}\dint\limits_{a}^{b}f(x)dx%
\leq \frac{f(a)+f(b)}{2}  \label{1.1}
\end{equation}%
where $f:I\subset 
\mathbb{R}
\rightarrow 
\mathbb{R}
$ is a convex function defined on the interval $I$ of $%
\mathbb{R}
,$ the set of real numbers, and $a,b\in I$ with $a<b,$ is well known in the
literature as Hadamard's inequality.

For some recent results related to this classic inequality, see \cite{PP}, 
\cite{MEO}, \cite{MU}, \cite{SS}, and \cite{DA}, where further references
are given.

In \cite{HM}, Hudzik and Maligranda considered, among others, the class of
functions which are $s-$convex in the second sense. This class is defined as
following:

\begin{definition}
A function $f:[0,\infty )\rightarrow $ $%
\mathbb{R}
$ is said to be $s-$convex in the second sense if%
\begin{equation*}
f(\lambda x+(1-\lambda )y)\leq \lambda ^{s}f(x)+(1-\lambda )^{s}f(y)
\end{equation*}%
holds for all $x,y\in \lbrack 0,\infty ),$ $\lambda \in \lbrack 0,1]$ and
for some fixed $s$ $\in (0,1].$
\end{definition}

The class of $s-$convex functions in the second sense is usually denoted
with $K_{s}^{2}.$ It is clear that if we choose $s=1$ we have ordinary
convexity of functions defined on $[0,\infty ).$

In \cite{UK}, K\i rmac\i\ \textit{et al.}, proved the following inequalities
related to product of convex functions. These are given in the next theorems.

\begin{theorem}
Let $f,g:[a,b]\rightarrow 
\mathbb{R}
,a,b\in \lbrack 0,\infty ),$ $a<b,$ be functions such that $g$ and $fg$ are
in $L^{1}([a,b]).$ If $f$ is convex and nonnegative on $[a,b],$ and if $g$
is $s-$convex on $[a,b]$ for some fixed $s$ $\in (0,1),$ then%
\begin{equation}
\frac{1}{b-a}\dint\limits_{a}^{b}f(x)g(x)dx\leq \frac{1}{s+2}M(a,b)+\frac{1}{%
(s+1)(s+2)}N(a,b)  \label{1.2}
\end{equation}%
where%
\begin{equation*}
M(a,b)=f(a)g(a)+f(b)g(b)\text{ and }N(a,b)=f(a)g(b)+f(b)g(a).
\end{equation*}
\end{theorem}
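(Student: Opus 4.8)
The plan is to reduce the integral to the unit interval by the affine substitution $x = ta + (1-t)b$, $t\in[0,1]$, under which $\frac{1}{b-a}\int_a^b f(x)g(x)\,dx = \int_0^1 f\!\left(ta+(1-t)b\right)g\!\left(ta+(1-t)b\right)dt$. On the integrand I would then invoke convexity of $f$ and $s$-convexity of $g$ in the second sense to get the pointwise majorizations $f(ta+(1-t)b)\le tf(a)+(1-t)f(b)$ and $g(ta+(1-t)b)\le t^{s}g(a)+(1-t)^{s}g(b)$. Since $f$ is nonnegative on $[a,b]$ (and hence so is the majorant $tf(a)+(1-t)f(b)$), these two estimates may be multiplied to give
\[
f(ta+(1-t)b)\,g(ta+(1-t)b)\le \bigl(tf(a)+(1-t)f(b)\bigr)\bigl(t^{s}g(a)+(1-t)^{s}g(b)\bigr).
\]

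Next I would expand the right-hand side into its four terms $t^{s+1}f(a)g(a)$, $t(1-t)^{s}f(a)g(b)$, $(1-t)t^{s}f(b)g(a)$ and $(1-t)^{s+1}f(b)g(b)$, and integrate termwise over $[0,1]$. The two "diagonal" integrals are elementary: $\int_0^1 t^{s+1}\,dt=\int_0^1(1-t)^{s+1}\,dt=\frac{1}{s+2}$, which produces the coefficient of $M(a,b)$. The two "cross" integrals are $\int_0^1 t(1-t)^{s}\,dt=\int_0^1(1-t)t^{s}\,dt=B(2,s+1)=\frac{\Gamma(2)\Gamma(s+1)}{\Gamma(s+3)}=\frac{1}{(s+1)(s+2)}$ — equivalently one may just write $\int_0^1(1-t)t^{s}\,dt=\frac{1}{s+1}-\frac{1}{s+2}$ — and these give the coefficient of $N(a,b)$. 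Collecting the contributions yields exactly the asserted inequality \eqref{1.2}.

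I do not anticipate a genuine obstacle: the proof is a direct combination of the two defining inequalities followed by Beta-function bookkeeping. The only point that needs a word of care is the multiplication of the two pointwise estimates, which relies on $f\ge 0$, and the termwise integration, which is legitimate because $g$ and $fg$ lie in $L^{1}([a,b])$; these are precisely the hypotheses of the theorem. Everything else is routine computation, the mildest technicality being the recognition of the cross terms as values of the Beta function.
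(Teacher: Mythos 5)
Your proof is correct; the paper states this result (Theorem 1) only as a quotation from K\i rmac\i\ \emph{et al.}\ \cite{UK} without reproducing a proof, and your argument --- substituting $x=ta+(1-t)b$, multiplying the pointwise convexity and $s$-convexity estimates, and integrating termwise via $\int_0^1 t^{s+1}dt=\frac{1}{s+2}$ and $B(2,s+1)=\frac{1}{(s+1)(s+2)}$ --- is exactly the standard proof from that reference. The one point you leave implicit is that multiplying the two majorizations also needs the majorant of $g$ (equivalently $g$ itself) to be nonnegative, but this is automatic: taking $x=y$ and $\lambda=\tfrac12$ in the definition gives $g(x)\leq 2^{1-s}g(x)$, hence $g\geq 0$ for $s\in(0,1)$.
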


\begin{theorem}
Let $f,g:[a,b]\rightarrow 
\mathbb{R}
,$ $a,b\in \lbrack 0,\infty ),$ $a<b,$ be functions such that $g$ and $fg$
are in $L^{1}([a,b]).$ If $f$ is $s_{1}-$convex and $g$ is $s_{2}-$convex on 
$[a,b]$ for some fixed $s_{1},s_{2}\in (0,1),$ then%
\begin{eqnarray}
\frac{1}{b-a}\dint\limits_{a}^{b}f(x)g(x)dx &\leq &\frac{1}{s_{1}+s_{2}+1}%
M(a,b)+B(s_{1}+1,s_{2}+1)N(a,b)  \notag \\
&=&\frac{1}{s_{1}+s_{2}+1}\left[ M(a,b)+s_{1}s_{2}\frac{\Gamma (s_{1})\Gamma
(s_{2})}{\Gamma (s_{1}+s_{2}+1)}N(a,b)\right]  \label{1.3}
\end{eqnarray}
\end{theorem}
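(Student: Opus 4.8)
The plan is to run the same affine-substitution argument used to prove \eqref{1.2}, but now expanding a genuine product of two $s$-convex estimates. First I would put $x=ta+(1-t)b$, $t\in[0,1]$, so that $dx=-(b-a)\,dt$ and
\begin{equation*}
\frac{1}{b-a}\dint\limits_{a}^{b}f(x)g(x)\,dx=\dint\limits_{0}^{1}f\big(ta+(1-t)b\big)\,g\big(ta+(1-t)b\big)\,dt .
\end{equation*}
Since $f$ is $s_{1}$-convex and $g$ is $s_{2}$-convex, for every $t\in[0,1]$
\begin{equation*}
f\big(ta+(1-t)b\big)\leq t^{s_{1}}f(a)+(1-t)^{s_{1}}f(b),\qquad g\big(ta+(1-t)b\big)\leq t^{s_{2}}g(a)+(1-t)^{s_{2}}g(b).
\end{equation*}

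Next I would multiply these two estimates. At this step one must know that the two right-hand sides are nonnegative and dominate the product on the left; this is guaranteed by the fact, noted by Hudzik and Maligranda, that a function which is $s$-convex in the second sense on $[0,\infty)$ with $s\in(0,1)$ is automatically nonnegative, so $f,g\geq 0$. Expanding the product and integrating term by term over $[0,1]$, I would use the elementary evaluations $\dint_{0}^{1}t^{s_{1}+s_{2}}\,dt=\dint_{0}^{1}(1-t)^{s_{1}+s_{2}}\,dt=\frac{1}{s_{1}+s_{2}+1}$ for the two "diagonal" terms $f(a)g(a)$ and $f(b)g(b)$, and the Beta-integral identities $\dint_{0}^{1}t^{s_{1}}(1-t)^{s_{2}}\,dt=\dint_{0}^{1}t^{s_{2}}(1-t)^{s_{1}}\,dt=B(s_{1}+1,s_{2}+1)$ for the two "cross" terms $f(a)g(b)$ and $f(b)g(a)$. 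Collecting these gives
\begin{equation*}
\frac{1}{b-a}\dint\limits_{a}^{b}f(x)g(x)\,dx\leq \frac{1}{s_{1}+s_{2}+1}M(a,b)+B(s_{1}+1,s_{2}+1)N(a,b),
\end{equation*}
which is the first line of \eqref{1.3}.

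For the second (closed-form) line I would simply rewrite $B(s_{1}+1,s_{2}+1)=\frac{\Gamma(s_{1}+1)\Gamma(s_{2}+1)}{\Gamma(s_{1}+s_{2}+2)}$ and apply $\Gamma(z+1)=z\Gamma(z)$ to each of the three factors, obtaining $B(s_{1}+1,s_{2}+1)=\frac{s_{1}s_{2}}{s_{1}+s_{2}+1}\cdot\frac{\Gamma(s_{1})\Gamma(s_{2})}{\Gamma(s_{1}+s_{2}+1)}$; pulling $\frac{1}{s_{1}+s_{2}+1}$ out of both terms then yields the stated identity. I do not expect a genuine obstacle here: the only point requiring care is the legitimacy of multiplying the two convexity inequalities, which rests on the nonnegativity of $s$-convex functions mentioned above, and the rest is the routine evaluation of the four integrals.
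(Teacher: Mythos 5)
Your proof is correct. Note that the paper does not actually prove this statement---it is quoted in the introduction from K\i rmac\i\ \emph{et al.} \cite{UK}---but your argument (the affine substitution $x=ta+(1-t)b$, multiplying the two $s$-convexity estimates after observing that $s$-convexity in the second sense with $s\in(0,1)$ forces nonnegativity, and then the four integral evaluations together with $\Gamma(z+1)=z\Gamma(z)$) is exactly the standard proof given in that reference, with the one delicate point, the legitimacy of multiplying the two inequalities, correctly justified.
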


\begin{theorem}
Let $f,g:[a,b]\rightarrow 
\mathbb{R}
,$ $a,b\in \lbrack 0,\infty ),$ $a<b,$ be functions such that $g$ and $fg$
are in $L^{1}([a,b]).$ If $f$ is convex and nonnegative on $[a,b],$ and if $%
g $ is $s-$convex on $[a,b]$ for some fixed $s$ $\in (0,1),$ then%
\begin{eqnarray}
&&2^{s}f(\frac{a+b}{2})g(\frac{a+b}{2})-\frac{1}{b-a}\dint%
\limits_{a}^{b}f(x)g(x)dx  \label{1.4} \\
&\leq &\frac{1}{(s+1)(s+2)}M(a,b)+\frac{1}{s+2}N(a,b)  \notag
\end{eqnarray}
\end{theorem}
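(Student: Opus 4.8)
The plan is to imitate the classical derivation of midpoint-type Hadamard inequalities, transporting everything through the substitution $x=\lambda a+(1-\lambda)b$. First I would split the midpoint as
\[
\frac{a+b}{2}=\frac{1}{2}\bigl(\lambda a+(1-\lambda)b\bigr)+\frac{1}{2}\bigl((1-\lambda)a+\lambda b\bigr),
\]
write $A=A(\lambda):=\lambda a+(1-\lambda)b$ and $B=B(\lambda):=(1-\lambda)a+\lambda b$, and apply convexity of $f$ together with $s$-convexity of $g$ at the parameter $\tfrac12$:
\[
f\bigl(\tfrac{a+b}{2}\bigr)\le\tfrac12\bigl(f(A)+f(B)\bigr),\qquad g\bigl(\tfrac{a+b}{2}\bigr)\le\tfrac{1}{2^{s}}\bigl(g(A)+g(B)\bigr).
\]
Recall that $s$-convexity in the second sense with $s\in(0,1)$ forces $g\ge0$ (take $x=y$ in the defining inequality), so, using also $f\ge0$, these two inequalities may be multiplied; multiplying the result by $2^{s}$ and expanding the product gives
\[
2^{s}f\bigl(\tfrac{a+b}{2}\bigr)g\bigl(\tfrac{a+b}{2}\bigr)\le\tfrac12\bigl[f(A)g(A)+f(B)g(B)+f(A)g(B)+f(B)g(A)\bigr].
\]

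Next I would integrate this inequality over $\lambda\in[0,1]$. The substitutions $x=A(\lambda)$ and $x=B(\lambda)$ each give $\int_0^1 f(A)g(A)\,d\lambda=\int_0^1 f(B)g(B)\,d\lambda=\frac{1}{b-a}\int_a^b f(x)g(x)\,dx$, so the two diagonal terms combine to exactly $\frac{1}{b-a}\int_a^b f(x)g(x)\,dx$, which I then move to the left-hand side. This produces
\[
2^{s}f\bigl(\tfrac{a+b}{2}\bigr)g\bigl(\tfrac{a+b}{2}\bigr)-\frac{1}{b-a}\int_a^b f(x)g(x)\,dx\le\tfrac12\int_0^1\bigl[f(A)g(B)+f(B)g(A)\bigr]\,d\lambda.
\]

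To bound the two remaining integrals I would replace $f$ and $g$ by their endpoint estimates, for instance $f(A)\le\lambda f(a)+(1-\lambda)f(b)$ and $g(B)\le(1-\lambda)^{s}g(a)+\lambda^{s}g(b)$ (and the mirror pair for $f(B)g(A)$), expand each product into four monomials in $\lambda$, and integrate term by term. The coefficients that occur are the elementary Beta integrals $\int_0^1\lambda^{s+1}\,d\lambda=\int_0^1(1-\lambda)^{s+1}\,d\lambda=\frac{1}{s+2}$ and $\int_0^1\lambda(1-\lambda)^{s}\,d\lambda=\int_0^1\lambda^{s}(1-\lambda)\,d\lambda=B(2,s+1)=\frac{1}{(s+1)(s+2)}$. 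Grouping the $f(a)g(a)+f(b)g(b)$ contributions under $\frac{1}{(s+1)(s+2)}$ and the $f(a)g(b)+f(b)g(a)$ contributions under $\frac{1}{s+2}$ shows that each of the two cross integrals is at most $\frac{1}{(s+1)(s+2)}M(a,b)+\frac{1}{s+2}N(a,b)$; the factor $\tfrac12$ is cancelled by there being two such integrals, and the bound obtained is precisely the right-hand side of \eqref{1.4}.

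I do not expect any genuine obstacle: the argument is a direct computation. The only points demanding care are the justification that the two midpoint inequalities may be multiplied (which uses the nonnegativity of $f$ and of $g$), the correct change-of-variable normalization $\frac{1}{b-a}$ in the diagonal terms, and the bookkeeping when expanding $\bigl[\lambda f(a)+(1-\lambda)f(b)\bigr]\bigl[(1-\lambda)^{s}g(a)+\lambda^{s}g(b)\bigr]$ and matching each monomial with its Beta value. The hypotheses $g,fg\in L^{1}([a,b])$ are used only to guarantee that every integral in sight is finite.
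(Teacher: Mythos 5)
Your argument is correct: the pointwise inequality $2^{s}f(\frac{a+b}{2})g(\frac{a+b}{2})\leq\frac{1}{2}[f(A)g(A)+f(B)g(B)+f(A)g(B)+f(B)g(A)]$ (legitimately multiplied thanks to $f\geq 0$ and the fact that $s$-convexity in the second sense with $s\in(0,1)$ forces $g\geq 0$), the change of variables absorbing the diagonal terms into $\frac{1}{b-a}\int_{a}^{b}fg$, and the Beta-integral bookkeeping for the cross terms all check out and yield exactly (\ref{1.4}). Note that the paper itself gives no proof of this statement --- it is quoted as Theorem 3 from the reference [UK] of K{\i}rmac{\i} et al. --- and your derivation is essentially the standard one used there.
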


For similar results, see the papers \cite{HM}, \cite{DF}.

In \cite{SS}, Dragomir defined convex functions on the co-ordinates as
follows and proved Lemma 1 related to this definiton:

\begin{definition}
Let us consider the bidimensional interval $\Delta =[a,b]\times \lbrack c,d]$
in $%
\mathbb{R}
^{2}$ with $a<b,$ $c<d.$ A function $f:\Delta \rightarrow 
\mathbb{R}
$ will be called convex on the co-ordinates if the partial mappings $%
f_{y}:[a,b]\rightarrow 
\mathbb{R}
,$ $f_{y}(u)=f(u,y)$ and $f_{x}:[c,d]\rightarrow 
\mathbb{R}
,$ $f_{x}(v)=f(x,v)$ are convex where defined for all $y\in \lbrack c,d]$
and $x\in \lbrack a,b].$ Recall that the mapping $f:\Delta \rightarrow 
\mathbb{R}
$ is convex on $\Delta $ if the following inequality holds, 
\begin{equation*}
f(\lambda x+(1-\lambda )z,\lambda y+(1-\lambda )w)\leq \lambda
f(x,y)+(1-\lambda )f(z,w)
\end{equation*}%
for all $(x,y),(z,w)\in \Delta $ and $\lambda \in \lbrack 0,1].$
\end{definition}

\begin{lemma}
Every \ convex mapping $f:\Delta \rightarrow 
\mathbb{R}
$ is convex on the co-ordinates, but converse is not general true.
\end{lemma}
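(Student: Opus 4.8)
The plan is to prove both halves of the statement separately: first that convexity on $\Delta$ implies convexity on the co-ordinates, and then to exhibit a single counterexample showing the converse fails. For the forward implication, fix $y\in[c,d]$ and consider the partial map $f_y:[a,b]\to\mathbb{R}$, $f_y(u)=f(u,y)$. Take any $u_1,u_2\in[a,b]$ and $\lambda\in[0,1]$. The key observation is that the point $\big(\lambda u_1+(1-\lambda)u_2,\,y\big)$ can be written as $\lambda(u_1,y)+(1-\lambda)(u_2,y)$, i.e. as a convex combination of the two points $(u_1,y),(u_2,y)\in\Delta$ with the \emph{same} second coordinate. Applying the definition of convexity of $f$ on $\Delta$ to these two points gives
\begin{equation*}
f\big(\lambda u_1+(1-\lambda)u_2,\,y\big)\leq \lambda f(u_1,y)+(1-\lambda)f(u_2,y),
\end{equation*}
which is precisely $f_y(\lambda u_1+(1-\lambda)u_2)\leq \lambda f_y(u_1)+(1-\lambda)f_y(u_2)$, the convexity of $f_y$. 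The argument for $f_x(v)=f(x,v)$ is identical, combining points with the same first coordinate. So every $f_y$ and every $f_x$ is convex, hence $f$ is convex on the co-ordinates.

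For the converse, I would present an explicit $f$ on, say, $\Delta=[0,1]^2$ that is co-ordinatewise convex (indeed co-ordinatewise affine, which is the cheapest way to be convex) but not convex as a function of two variables. The standard choice is $f(x,y)=xy$: for fixed $y$ the map $x\mapsto xy$ is linear, hence convex, and similarly for fixed $x$, so $f$ is convex on the co-ordinates. To see it is not convex on $\Delta$, test the definition on the diagonal points $(0,0)$ and $(1,1)$ with $\lambda=\tfrac12$: the midpoint is $(\tfrac12,\tfrac12)$ with $f(\tfrac12,\tfrac12)=\tfrac14$, whereas $\tfrac12 f(0,0)+\tfrac12 f(1,1)=0$, and $\tfrac14\not\leq 0$. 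This violates the convexity inequality, so $f$ is not convex on $\Delta$, completing the proof.

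There is no real obstacle here; the only thing to be careful about is the logical structure — making sure the reader sees that "convex on the co-ordinates" is genuinely weaker, which is exactly what the counterexample is for. One could alternatively justify the failure of convexity of $xy$ by computing its Hessian $\begin{pmatrix}0&1\\1&0\end{pmatrix}$, which is indefinite (eigenvalues $\pm 1$), but the elementary two-point check is cleaner and self-contained and does not require smoothness machinery, so that is the route I would take in the write-up.
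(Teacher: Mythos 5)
The forward implication is argued correctly and is the standard argument (fix one coordinate and apply the two-variable convexity inequality to a pair of points sharing that coordinate). The problem is in your counterexample verification. The function $f(x,y)=xy$ is indeed the right choice, but the two-point test you ran does not witness the failure of convexity: with the points $(0,0)$, $(1,1)$ and $\lambda=\tfrac12$ you have $f(1,1)=1$, so $\tfrac12 f(0,0)+\tfrac12 f(1,1)=\tfrac12$, not $0$, and the inequality $f(\tfrac12,\tfrac12)=\tfrac14\leq\tfrac12$ in fact \emph{holds}. This is not a repairable arithmetic slip in place: along the main diagonal $f(t,t)=t^2$ is convex, so no violation of convexity can ever be detected on that segment. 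The direction in which $xy$ fails to be convex is the anti-diagonal (the eigenvector of the Hessian $\left(\begin{smallmatrix}0&1\\1&0\end{smallmatrix}\right)$ for the eigenvalue $-1$), so you must test $(0,1)$ and $(1,0)$ instead: the midpoint is again $(\tfrac12,\tfrac12)$ with value $\tfrac14$, while $\tfrac12 f(0,1)+\tfrac12 f(1,0)=0$, and $\tfrac14\not\leq 0$ gives the required contradiction. With that one repair the proof is complete. For context, the paper itself offers no proof of this lemma --- it is quoted from Dragomir's article, where the counterexample is exactly $f(x,y)=xy$ on $[0,1]^2$, tested on the anti-diagonal as above.
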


A formal definition for co-ordinated convex functions may be stated as
follow [see \cite{MAL}]:

\begin{definition}
A function $f:\Delta \rightarrow \mathbb{R}$ is said to be convex on the
co-ordinates on $\Delta $ if the following inequality:%
\begin{align*}
& f(tx+(1-t)y,su+(1-s)w) \\
& \leq tsf(x,u)+t(1-s)f(x,w)+s(1-t)f(y,u)+(1-t)(1-s)f(y,w)
\end{align*}%
holds for all $t,s\in \lbrack 0,1]$ and $(x,u),(x,w),(y,u),(y,w)\in \Delta $.
\end{definition}

In \cite{SS}, Dragomir established the following inequalities:

\begin{theorem}
Suppose that $f:\Delta =$ $[a,b]\times \lbrack c,d]\rightarrow 
\mathbb{R}
$ is convex on the co-ordinates on $\Delta .$ Then one has the inequalities:%
\begin{eqnarray}
&&f(\frac{a+b}{2},\frac{c+d}{2})  \notag \\
&\leq &\frac{1}{\left( b-a\right) \left( d-c\right) }\dint\limits_{a}^{b}%
\dint\limits_{c}^{d}f(x,y)dxdy  \label{1.5} \\
&\leq &\frac{f(a,c)+f(a,d)+f(b,c)+f(b,d)}{4}  \notag
\end{eqnarray}
\end{theorem}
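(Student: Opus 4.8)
The plan is to reduce the two-dimensional Hadamard inequality \eqref{1.5} to repeated application of the one-dimensional inequality \eqref{1.1}, using Dragomir's Lemma~1 (the characterization of co-ordinated convexity via convexity of the partial mappings). First I would establish the \emph{left-hand} inequality. Fix the variable $y$ and consider the partial mapping $f_{y}(x)=f(x,y)$, which is convex on $[a,b]$; applying the left half of \eqref{1.1} gives
\begin{equation*}
f\!\left(\frac{a+b}{2},y\right)\leq \frac{1}{b-a}\dint\limits_{a}^{b}f(x,y)\,dx .
\end{equation*}
Now integrate both sides over $y\in[c,d]$ and divide by $d-c$; the right side becomes the double average. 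For the left side, introduce $h(y):=f\!\left(\frac{a+b}{2},y\right)$, which is the partial mapping $f_{x}$ evaluated at $x=\frac{a+b}{2}$ and hence convex in $y$; applying the left half of \eqref{1.1} to $h$ on $[c,d]$ yields $f\!\left(\frac{a+b}{2},\frac{c+d}{2}\right)=h\!\left(\frac{c+d}{2}\right)\leq \frac{1}{d-c}\dint\limits_{c}^{d}h(y)\,dy$, which chains together to give the left inequality of \eqref{1.5}.

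Next I would do the \emph{right-hand} inequality symmetrically, now using the right half of \eqref{1.1}. For fixed $y$, convexity of $f_{y}$ gives
\begin{equation*}
\frac{1}{b-a}\dint\limits_{a}^{b}f(x,y)\,dx\leq \frac{f(a,y)+f(b,y)}{2}.
\end{equation*}
Integrate in $y$ and divide by $d-c$: the left side is again the double average, and the right side is $\frac{1}{2(d-c)}\dint\limits_{c}^{d}f(a,y)\,dy+\frac{1}{2(d-c)}\dint\limits_{c}^{d}f(b,y)\,dy$. Each of these single integrals involves a partial mapping $f_{x}$ (at $x=a$ and $x=b$ respectively), which is convex in $y$, so the right half of \eqref{1.1} bounds $\frac{1}{d-c}\dint\limits_{c}^{d}f(a,y)\,dy\leq \frac{f(a,c)+f(a,d)}{2}$ and similarly for $x=b$. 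Substituting gives exactly $\frac{f(a,c)+f(a,d)+f(b,c)+f(b,d)}{4}$, completing \eqref{1.5}.

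The only genuine subtlety — the step I would be most careful about — is the legitimacy of interchanging integration and the one-dimensional inequalities, i.e. making sure the relevant partial mappings are integrable so that Fubini-type manipulations and monotonicity of the integral apply; convexity of the partial mappings on closed bounded intervals already guarantees their continuity on the open interval and boundedness, so integrability is automatic and this causes no real trouble. A secondary point worth stating explicitly is that $y\mapsto f\!\left(\frac{a+b}{2},y\right)$ and $y\mapsto f(a,y)$, $y\mapsto f(b,y)$ really are among the partial mappings $f_{x}$ covered by Definition (co-ordinated convexity), so that \eqref{1.1} may be invoked for them. Everything else is bookkeeping: two nested applications of \eqref{1.1} for each side, with the inner application performed \emph{under} the integral sign and the outer one after integrating out the first variable.
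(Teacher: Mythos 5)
Your proof is correct, and it is essentially the standard (indeed Dragomir's original) argument: the paper itself quotes this theorem from \cite{SS} without proof, and the proof there proceeds exactly as you do, by applying the one-dimensional inequality \eqref{1.1} to the convex partial mappings $f_{y}$ under the integral sign and then again to the convex partial mappings $f_{x}$ at $x=\frac{a+b}{2}$, $x=a$ and $x=b$. Your closing remarks on integrability and on identifying $y\mapsto f(\frac{a+b}{2},y)$ etc.\ as partial mappings covered by the definition are the right points to flag, and the bookkeeping ($\frac{1}{2}\cdot\frac{f(a,c)+f(a,d)}{2}+\frac{1}{2}\cdot\frac{f(b,c)+f(b,d)}{2}$) does yield the stated constant $\frac{1}{4}$.
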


Similar results, refinements and generalizations can be found in \cite{LA}, 
\cite{DAR1}, \cite{DAR2}, \cite{DAR3}, \cite{MSET} and \cite{OZ2}.

In \cite{DAR3}, Alomari and Darus defined $s-$convexity on $\Delta $ as
follows:

\begin{definition}
Consider the bidimensional interval $\Delta :=$ $[a,b]\times \lbrack c,d]$
in $[0,\infty )^{2}$ with $a<b$ and $c<d.$ The mapping $f:\Delta \rightarrow 
\mathbb{R}
$ is $s-$convex on $\Delta $ if%
\begin{equation*}
f(\lambda x+(1-\lambda )z,\lambda y+(1-\lambda )w)\leq \lambda
^{s}f(x,y)+(1-\lambda )^{s}f(z,w)
\end{equation*}%
holds for all $(x,y),$ $(z,w)\in \Delta $ with $\lambda \in \lbrack 0,1]$
and for some fixed $s$ $\in (0,1].$
\end{definition}

In \cite{DAR3}, Alomari and Darus proved the following lemma:

\begin{lemma}
Every $s-$convex mappings $f:\Delta :=$ $[a,b]\times \lbrack c,d]\subset
\lbrack 0,\infty )^{2}\rightarrow \lbrack 0,\infty )$ is $s-$convex on the
co-ordinates, but converse is not true in general .
\end{lemma}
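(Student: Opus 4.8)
The plan is to mirror Dragomir's argument for Lemma~1, adapting it to the $s$-convex setting. First I would fix an arbitrary $y\in\lbrack c,d]$ and show that the partial mapping $f_{y}:[a,b]\rightarrow\lbrack 0,\infty)$, $f_{y}(u)=f(u,y)$, is $s$-convex in the second sense. To this end, take $u_{1},u_{2}\in\lbrack a,b]$ and $\lambda\in\lbrack 0,1]$, and apply the hypothesized $s$-convexity of $f$ on $\Delta$ to the pair of points $(u_{1},y),(u_{2},y)\in\Delta$:
\begin{equation*}
f(\lambda u_{1}+(1-\lambda)u_{2},\lambda y+(1-\lambda)y)\leq\lambda^{s}f(u_{1},y)+(1-\lambda)^{s}f(u_{2},y).
\end{equation*}
Since $\lambda y+(1-\lambda)y=y$, the left-hand side is exactly $f_{y}(\lambda u_{1}+(1-\lambda)u_{2})$, so $f_{y}(\lambda u_{1}+(1-\lambda)u_{2})\leq\lambda^{s}f_{y}(u_{1})+(1-\lambda)^{s}f_{y}(u_{2})$; that is, $f_{y}\in K_{s}^{2}$. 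The same computation with the two variables interchanged --- fixing $x\in\lbrack a,b]$ and using the points $(x,v_{1}),(x,v_{2})$ --- shows that $f_{x}:[c,d]\rightarrow\lbrack 0,\infty)$, $f_{x}(v)=f(x,v)$, is $s$-convex. Hence $f$ is $s$-convex on the co-ordinates.

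For the second assertion I would exhibit one function showing that the converse can fail. Consider $\Delta=[0,1]\times\lbrack 0,1]$ and $f(x,y)=xy$. For each fixed $y$ the map $u\mapsto uy$ is nonnegative and linear on $[0,1]$, and every nonnegative linear function on $[0,\infty)$ belongs to $K_{s}^{2}$ because $\lambda\leq\lambda^{s}$ and $1-\lambda\leq(1-\lambda)^{s}$ for $\lambda\in\lbrack 0,1]$ and $s\in(0,1]$; the same holds for each fixed $x$. Thus $f$ is $s$-convex on the co-ordinates. On the other hand, taking $(x,y)=(1,0)$, $(z,w)=(0,1)$ and $\lambda=\frac{1}{2}$ gives
\begin{equation*}
f\left(\frac{1}{2},\frac{1}{2}\right)=\frac{1}{4}>0=\left(\frac{1}{2}\right)^{s}f(1,0)+\left(\frac{1}{2}\right)^{s}f(0,1),
\end{equation*}
so the defining inequality of $s$-convexity on $\Delta$ fails; hence $f$ is not $s$-convex on $\Delta$.

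There is no serious obstacle here: the forward implication is a one-line specialization of the hypothesis obtained by collapsing one coordinate, and the only point needing a little care is verifying that linear functions (more generally, affine functions with nonnegative data) really are $s$-convex in the second sense, which is exactly where the elementary inequality $\lambda\leq\lambda^{s}$ on $[0,1]$ is used. The example $f(x,y)=xy$ then settles the non-equivalence.
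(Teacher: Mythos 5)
Your proof is correct. The paper itself states this lemma without proof, importing it from Alomari and Darus \cite{DAR3}, and your argument is exactly the standard one used there: the forward implication by specializing the joint $s$-convexity inequality to points sharing a coordinate, and the counterexample $f(x,y)=xy$ on $[0,1]^2$ (coordinate-wise linear and nonnegative, hence in $K_{s}^{2}$ in each variable since $\lambda\leq\lambda^{s}$ on $[0,1]$, yet failing the joint inequality at $(1,0)$, $(0,1)$, $\lambda=\tfrac{1}{2}$). Nothing is missing.
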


In \cite{LAT}, Latif and Alomari established Hadamard-type inequalities for
product of two convex functions on the co-ordinates as follow:

\begin{theorem}
Let $\ f,g:\Delta :=$ $[a,b]\times \lbrack c,d]\subset 
\mathbb{R}
^{2}\rightarrow \lbrack 0,\infty )$ be convex functions on the co-ordinates
on $\Delta $ with $a<b$ and $c<d.$ Then 
\begin{eqnarray}
&&\frac{1}{\left( b-a\right) \left( d-c\right) }\dint\limits_{a}^{b}\dint%
\limits_{c}^{d}f(x,y)g(x,y)dxdy  \label{1.6} \\
&\leq &\frac{1}{9}L(a,b,c,d)+\frac{1}{18}M(a,b,c,d)+\frac{1}{36}N(a,b,c,d) 
\notag
\end{eqnarray}%
where%
\begin{eqnarray*}
L(a,b,c,d) &=&f(a,c)g(a,c)+f(b,c)g(b,c)+f(a,d)g(a,d)+f(b,d)g(b,d) \\
M(a,b,c,d) &=&f(a,c)g(a,d)+f(a,d)g(a,c)+f(b,c)g(b,d)+f(b,d)g(b,c) \\
&&+f(b,c)g(a,c)+f(b,d)g(a,d)+f(a,c)g(b,c)+f(a,d)g(b,d) \\
N(a,b,c,d) &=&f(b,c)g(a,d)+f(b,d)g(a,c)+f(a,c)g(b,d)+f(a,d)g(b,c)
\end{eqnarray*}
\end{theorem}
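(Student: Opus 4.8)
The plan is to pass from the double integral over $\Delta$ to an integral over the unit square by the affine change of variables $x = ta+(1-t)b$, $y = sc+(1-s)d$ with $(t,s)\in[0,1]^2$. The Jacobian has absolute value $(b-a)(d-c)$, so
\[
\frac{1}{(b-a)(d-c)}\int_a^b\int_c^d f(x,y)g(x,y)\,dx\,dy
= \int_0^1\int_0^1 f(ta+(1-t)b,\,sc+(1-s)d)\,g(ta+(1-t)b,\,sc+(1-s)d)\,dt\,ds .
\]

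The second step is to estimate the integrand. Because $f$ is convex on the co-ordinates, applying the one-variable convexity bound first in the second variable (with $c$, $d$) and then in the first (with $a$, $b$) gives, for all $(t,s)\in[0,1]^2$,
\[
f(ta+(1-t)b,\,sc+(1-s)d) \le ts\,f(a,c) + t(1-s)\,f(a,d) + (1-t)s\,f(b,c) + (1-t)(1-s)\,f(b,d),
\]
and the same estimate holds with $g$ in place of $f$. Since $f$ and $g$ are nonnegative, both right-hand sides are $\ge 0$, so the product $f\cdot g$ of the two left-hand sides is bounded above by the product of the two right-hand sides. Integrating this product over $[0,1]^2$ therefore bounds the quantity in the first display.

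The third step is purely computational: expand the product of the two weighted sums of corner values into its sixteen terms, each of which is a product $f(P)g(Q)$ of corner values times one of the monomials obtained from $t^2s^2$, $t^2s(1-s)$, $t(1-t)s^2$, $t(1-t)s(1-s)$ by possibly swapping $t\leftrightarrow 1-t$ or $s\leftrightarrow 1-s$; then integrate termwise over $[0,1]^2$ using $\int_0^1 u^2\,du = \tfrac13$ and $\int_0^1 u(1-u)\,du = \tfrac16$. One finds that each term with $P=Q$ contributes the weight $\tfrac13\cdot\tfrac13 = \tfrac19$, each term in which $P$ and $Q$ differ in exactly one co-ordinate contributes $\tfrac13\cdot\tfrac16 = \tfrac1{18}$, and each term in which $P$ and $Q$ differ in both co-ordinates contributes $\tfrac16\cdot\tfrac16 = \tfrac1{36}$. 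Collecting the four terms of the first kind into $L(a,b,c,d)$, the eight of the second kind into $M(a,b,c,d)$, and the four of the third kind into $N(a,b,c,d)$ yields exactly \eqref{1.6}.

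There is no essential difficulty here; the main (and only mildly tedious) obstacle is the bookkeeping of the sixteen cross terms and checking that each falls into the correct group $L$, $M$ or $N$ with the right multiplicity. It should also be emphasized that the hypothesis that $f$ and $g$ map into $[0,\infty)$ is used crucially: it is precisely what legitimizes multiplying the two pointwise convexity estimates to obtain an upper bound for $f(x,y)g(x,y)$.
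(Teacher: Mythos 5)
Your proof is correct, but it takes a different route from the one the paper relies on. The paper does not prove this theorem directly: it quotes it from Latif--Alomari and then recovers it (Remark 1) as the case $s=1$ of its Theorem 7 / inequality (2.1), whose proof is an iterated one-dimensional argument --- fix $x$, apply the K\i rmac\i\ et al.\ product inequality (1.2) to the partial mappings $f_x g_x$ on $[c,d]$, integrate the result over $[a,b]$, and then apply (1.2) again to each of the four one-dimensional integrals that appear; with $s=1$ the coefficients $\tfrac{1}{(s+2)^2}$, $\tfrac{1}{(s+1)(s+2)^2}$, $\tfrac{1}{(s+1)^2(s+2)^2}$ become $\tfrac19$, $\tfrac1{18}$, $\tfrac1{36}$. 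You instead work directly in two dimensions: change variables to the unit square, bound the integrand pointwise by the product of the two four-term co-ordinated convexity estimates (which is legitimate precisely because $f,g\ge 0$, as you note), and integrate the sixteen monomials using $\int_0^1 u^2\,du=\tfrac13$ and $\int_0^1 u(1-u)\,du=\tfrac16$; the grouping by how many co-ordinates the two corner points $P,Q$ differ in reproduces $L$, $M$, $N$ with the weights $\tfrac19$, $\tfrac1{18}$, $\tfrac1{36}$. Your version is more self-contained (it does not invoke the one-dimensional product inequality as a black box, only the definition of co-ordinated convexity and nonnegativity), and it is essentially the original Latif--Alomari argument; the paper's iterated route has the advantage that it generalizes immediately to the $s$-convex and $(s_1,s_2)$-convex settings of its Theorems 7 and 8, since the needed one-dimensional product inequalities are already available there. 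The only caveat, which the paper shares, is that an integrability/measurability hypothesis on $fg$ is tacitly assumed when Fubini or the change of variables is applied.
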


\begin{theorem}
Let $\ f,g:\Delta :=$ $[a,b]\times \lbrack c,d]\subset 
\mathbb{R}
^{2}\rightarrow \lbrack 0,\infty )$ be convex functions on the co-ordinates
on $\Delta $ with $a<b$ and $c<d.$ Then%
\begin{eqnarray}
&&4f(\frac{a+b}{2},\frac{c+d}{2})g(\frac{a+b}{2},\frac{c+d}{2})  \label{1.7}
\\
&\leq &\frac{1}{\left( b-a\right) \left( d-c\right) }\dint\limits_{a}^{b}%
\dint\limits_{c}^{d}f(x,y)g(x,y)dxdy  \notag \\
&&+\frac{5}{36}L(a,b,c,d)+\frac{7}{36}M(a,b,c,d)+\frac{2}{9}N(a,b,c,d) 
\notag
\end{eqnarray}%
where $L(a,b,c,d),$ $M(a,b,c,d)$, $N(a,b,c,d)$ as in (\ref{1.6}).
\end{theorem}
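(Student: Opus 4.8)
The plan is to reduce the two–dimensional estimate to the one–variable Hadamard inequalities for products of convex functions and to iterate them coordinate by coordinate. The two building blocks I would use are the convex ($s=1$) instances of (\ref{1.2}) and (\ref{1.4}): for convex nonnegative $\phi,\psi$ on an interval $[\alpha,\beta]$,
\[
\frac{1}{\beta-\alpha}\int_\alpha^\beta \phi(t)\psi(t)\,dt \le \tfrac13\big[\phi(\alpha)\psi(\alpha)+\phi(\beta)\psi(\beta)\big]+\tfrac16\big[\phi(\alpha)\psi(\beta)+\phi(\beta)\psi(\alpha)\big]
\]
and
\[
2\,\phi\!\big(\tfrac{\alpha+\beta}{2}\big)\psi\!\big(\tfrac{\alpha+\beta}{2}\big)\le \frac{1}{\beta-\alpha}\int_\alpha^\beta \phi(t)\psi(t)\,dt + \tfrac16\big[\phi(\alpha)\psi(\alpha)+\phi(\beta)\psi(\beta)\big]+\tfrac13\big[\phi(\alpha)\psi(\beta)+\phi(\beta)\psi(\alpha)\big].
\]
These are the classical Pachpatte inequalities and may be recovered from (\ref{1.2}), (\ref{1.4}) by letting $s\to1^{-}$. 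Since $f,g$ are convex on the co-ordinates and nonnegative, every partial map $f(\cdot,y),\,f(x,\cdot),\,g(\cdot,y),\,g(x,\cdot)$ is convex and nonnegative, so both blocks apply to them.

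First I would apply the midpoint block in the $x$–variable to the partials $f(\cdot,y),g(\cdot,y)$ for fixed $y$, then integrate in $y$ over $[c,d]$ and divide by $d-c$, obtaining
\[
\frac{2}{d-c}\int_c^d f\!\big(\tfrac{a+b}{2},y\big)g\!\big(\tfrac{a+b}{2},y\big)\,dy \le I + \tfrac16 R_1 + \tfrac13 R_2,
\]
where $I=\frac{1}{(b-a)(d-c)}\int_a^b\int_c^d fg$, and $R_1,R_2$ are the $[c,d]$–averages of the edge products $f(a,\cdot)g(a,\cdot)+f(b,\cdot)g(b,\cdot)$ and $f(a,\cdot)g(b,\cdot)+f(b,\cdot)g(a,\cdot)$. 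Next I would apply the midpoint block a second time, now in $y$, to $f(\tfrac{a+b}{2},\cdot),g(\tfrac{a+b}{2},\cdot)$; solving that inequality for the integral on the left above and substituting yields
\[
4\,f\!\big(\tfrac{a+b}{2},\tfrac{c+d}{2}\big)g\!\big(\tfrac{a+b}{2},\tfrac{c+d}{2}\big)\le I + \tfrac16 R_1 + \tfrac13 R_2 + \tfrac13 P + \tfrac23 Q,
\]
where $P=f(\tfrac{a+b}{2},c)g(\tfrac{a+b}{2},c)+f(\tfrac{a+b}{2},d)g(\tfrac{a+b}{2},d)$ and $Q=f(\tfrac{a+b}{2},c)g(\tfrac{a+b}{2},d)+f(\tfrac{a+b}{2},d)g(\tfrac{a+b}{2},c)$ are the midpoint–edge products from the second application.

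It remains to express $R_1,R_2,P,Q$ through corner values. For $R_1$ and $R_2$ I would invoke the integral block in the $y$–variable on the edge partials $f(a,\cdot),g(a,\cdot),f(b,\cdot),g(b,\cdot)$; for $P$ and $Q$ I would use the scalar bound $\phi(\tfrac{\alpha+\beta}{2})\le\tfrac12[\phi(\alpha)+\phi(\beta)]$ on each factor and multiply out, which is legitimate since all values are nonnegative. Tracking where each corner product lands, $L$ (equal–point products) is fed only by $\tfrac16R_1$ and $\tfrac13P$, giving $\tfrac1{18}+\tfrac1{12}=\tfrac5{36}$; an $x$–adjacent $M$–term is fed by $\tfrac13P$ and $\tfrac13R_2$, giving $\tfrac1{12}+\tfrac19=\tfrac7{36}$, while a $y$–adjacent $M$–term is fed by $\tfrac16R_1$ and $\tfrac23Q$, giving $\tfrac1{36}+\tfrac16=\tfrac7{36}$; and $N$ (products differing in both coordinates) is fed by $\tfrac13R_2$ and $\tfrac23Q$, giving $\tfrac1{18}+\tfrac16=\tfrac29$. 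Collecting everything reproduces exactly $\tfrac5{36}L+\tfrac7{36}M+\tfrac29N$.

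The main obstacle is this final bookkeeping rather than any single delicate estimate: one must verify that the eight members of $M$, split between the two distinct source pairs $(\tfrac13P,\tfrac13R_2)$ and $(\tfrac16R_1,\tfrac23Q)$, recombine to the \emph{same} coefficient, and likewise that every corner product is assigned to the correct one of $L,M,N$. The entire difficulty lies in organizing the expansion so that the mixed fractions collapse to the three stated constants.
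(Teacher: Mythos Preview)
Your argument is correct and the bookkeeping you outline does reproduce the constants $\tfrac{5}{36},\tfrac{7}{36},\tfrac{2}{9}$ exactly. Note, however, that the paper does not prove this statement directly; it is quoted from \cite{LAT} and recovered as the $s=1$ specialization of Theorem~9 (inequality~(\ref{2.5})), so the relevant comparison is with that proof. The route taken there is genuinely different from yours. The paper proceeds \emph{symmetrically}: it applies the midpoint block (\ref{1.4}) at $y=\tfrac{c+d}{2}$ in the $x$-variable and at $x=\tfrac{a+b}{2}$ in the $y$-variable, adds the two resulting inequalities (this is why the factor $2^{2s+1}$, i.e.\ $8$ at $s=1$, appears), then expands each of the eight midpoint–edge products on the right again via (\ref{1.4}), producing edge integrals plus corner terms; separately it integrates (\ref{1.4}) over the other variable to obtain (\ref{2.11})–(\ref{2.13}), adds, and uses this to cancel the two midpoint–line integrals on the left, finally estimating the remaining edge integrals by (\ref{1.2}). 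Your route is \emph{asymmetric} and shorter: you iterate (\ref{1.4}) once in $x$ (averaged over $y$) and once in $y$ (at the single point $x=\tfrac{a+b}{2}$), so no doubling and no cancellation trick are needed, and for the midpoint–edge products $P,Q$ you bypass a second use of (\ref{1.4}) by the elementary bound $\phi(\tfrac{\alpha+\beta}{2})\psi(\tfrac{\alpha+\beta}{2})\le\tfrac14[\phi(\alpha)+\phi(\beta)][\psi(\alpha)+\psi(\beta)]$. Both methods yield identical constants (indeed, chaining (\ref{1.4}) with (\ref{1.2}) at $s=1$ gives precisely that elementary bound), but the paper's version keeps the $x\!\leftrightarrow\!y$ symmetry visible throughout, whereas yours trades that symmetry for a considerably leaner computation.
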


The main purpose of this paper is to establish new inequalities like (\ref%
{1.6}) and (\ref{1.7}), but now for convex functions and $s-$convex
functions of $2-$variables on the co-ordinates.

\section{MAIN RESULTS}

\begin{theorem}
Let $\ f:\Delta :=$ $[a,b]\times \lbrack c,d]\subset \lbrack 0,\infty
)^{2}\rightarrow \lbrack 0,\infty )$ be convex function on the co-ordinates
and $g:\Delta :=$ $[a,b]\times \lbrack c,d]\subset \lbrack 0,\infty
)^{2}\rightarrow \lbrack 0,\infty )$ be $s-$convex function on the
co-ordinates with $a<b,$ $c<d$ and $f_{x}(y)g_{x}(y),$ $\
f_{y}(x)g_{y}(x)\in L^{1}[\Delta ]$ for some fixed $s\in (0,1].$ Then one
has the inequality:%
\begin{eqnarray}
&&\frac{1}{\left( d-c\right) \left( b-a\right) }\dint\limits_{a}^{b}\dint%
\limits_{c}^{d}f(x,y)g(x,y)dxdy  \label{2.1} \\
&\leq &\frac{1}{\left( s+2\right) ^{2}}L(a,b,c,d)+\frac{1}{(s+1)\left(
s+2\right) ^{2}}M(a,b,c,d)  \notag \\
&&+\frac{1}{(s+1)^{2}(s+2)^{2}}N(a,b,c,d)  \notag
\end{eqnarray}%
where%
\begin{eqnarray*}
L(a,b,c,d) &=&\frac{1}{\left( s+2\right) ^{2}}\left( \left[
f(a,c)g(a,c)+f(b,c)g(b,c)\right] +\left[ f(a,d)g(a,d)+f(b,d)g(b,d)\right]
\right) \\
M(a,b,c,d) &=&\frac{1}{(s+1)\left( s+2\right) ^{2}}\left( \left[
f(a,c)g(b,c)+f(b,c)g(a,c)\right] +\left[ f(a,d)g(b,d)+f(b,d)g(a,d)\right]
\right) \\
&&+\frac{1}{(s+1)\left( s+2\right) ^{2}}\left( \left[
f(a,c)g(a,d)+f(b,c)g(b,d)\right] +\left[ f(a,d)g(a,c)+f(b,d)g(b,c)\right]
\right) \\
N(a,b,c,d) &=&\frac{1}{(s+1)^{2}(s+2)^{2}}\left( \left[
f(a,c)g(b,d)+f(b,c)g(a,d)\right] +\left[ f(a,d)g(b,c)+f(b,d)g(a,c)\right]
\right)
\end{eqnarray*}
\end{theorem}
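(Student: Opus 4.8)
The plan is to reduce the two–dimensional estimate to the one–variable product inequality \eqref{1.2} applied twice—once in each coordinate—exactly in the spirit of Dragomir's passage from \eqref{1.1} to \eqref{1.5}. First I would fix $x\in[a,b]$ and consider the partial mappings $f_x(y)=f(x,y)$ and $g_x(y)=g(x,y)$ on $[c,d]$; by hypothesis $f_x$ is convex and nonnegative and $g_x$ is $s$-convex, so Theorem \ref{1.2} (with $a,b$ replaced by $c,d$) gives
\[
\frac{1}{d-c}\int_c^d f(x,y)g(x,y)\,dy\leq \frac{1}{s+2}\bigl[f(x,c)g(x,c)+f(x,d)g(x,d)\bigr]+\frac{1}{(s+1)(s+2)}\bigl[f(x,c)g(x,d)+f(x,d)g(x,c)\bigr].
\]
I would then integrate this inequality in $x$ over $[a,b]$ and divide by $b-a$, so that the left side becomes the desired double average.

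**Second application and bookkeeping.** The right side is now a sum of four terms, each of the form $\frac{1}{b-a}\int_a^b h(x)k(x)\,dx$ where $h$ is one of $f(\cdot,c),f(\cdot,d)$ and $k$ is one of $g(\cdot,c),g(\cdot,d)$. For each such pair the first factor is convex and nonnegative and the second is $s$-convex as a partial mapping in the $x$-variable, so I would apply Theorem \ref{1.2} again in the $x$-direction to bound each of the four integrals by its own $M$/$N$-type expression at the endpoints $a,b$. Collecting the four contributions, the term with coefficient $\frac{1}{(s+2)^2}$ produces the sum $f(a,c)g(a,c)+f(b,c)g(b,c)+f(a,d)g(a,d)+f(b,d)g(b,d)$ (this is $L$), the cross terms coming from $\frac{1}{s+2}\cdot\frac{1}{(s+1)(s+2)}$ in either order give the eight mixed products constituting $M$ with coefficient $\frac{1}{(s+1)(s+2)^2}$, and the term $\frac{1}{(s+1)^2(s+2)^2}$ collects the four ``fully crossed'' products forming $N$. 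Reading off the coefficients $\frac{1}{(s+2)^2}$, $\frac{1}{(s+1)(s+2)^2}$, $\frac{1}{(s+1)^2(s+2)^2}$ matches the claimed bound \eqref{2.1}.

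**Main obstacle.** The analytic content is entirely contained in the already-proved Theorem \ref{1.2}; there is no new inequality to discover. The only genuine work is the combinatorial bookkeeping in the second step: one must be careful that applying an inequality for $s$-convex second factor is legitimate in each of the four cases (it is, since each of $g(\cdot,c)$ and $g(\cdot,d)$ is an $s$-convex partial mapping by the coordinate $s$-convexity of $g$, and each of $f(\cdot,c),f(\cdot,d)$ is convex and nonnegative), and that the sixteen endpoint products are grouped into $L$, $M$, $N$ with exactly the right multiplicities. A secondary point worth a line is the measurability/integrability needed to invoke Fubini when integrating the first inequality in $x$, which is guaranteed by the hypothesis $f_x g_x, f_y g_y\in L^1[\Delta]$. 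I expect the grouping of the eight terms in $M$ to be the step most prone to slips, so I would organize it by writing the product of the two right-hand sides of the one-variable inequality symbolically before substituting endpoint values.
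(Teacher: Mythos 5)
Your proposal is correct and follows essentially the same route as the paper's own proof: apply the one-variable product inequality \eqref{1.2} first to the partial mappings in the $y$-direction, integrate over $[a,b]$, then apply \eqref{1.2} again to each of the four resulting one-variable integrals and collect the sixteen endpoint products into the $L$, $M$, $N$ groups with coefficients $\frac{1}{(s+2)^{2}}$, $\frac{1}{(s+1)(s+2)^{2}}$, $\frac{1}{(s+1)^{2}(s+2)^{2}}$. The only remark worth adding is that your reading of $L$, $M$, $N$ as the bare sums of products is the one consistent with what the proof actually establishes (the coefficients printed inside the paper's definitions of $L$, $M$, $N$ are evidently a typographical duplication).
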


\begin{proof}
Since $f$ is co-ordinated convex and $g$ is co-ordinated $s-$convex, from
Lemma 1 and Lemma 2, the partial mappings%
\begin{eqnarray*}
f_{y} &:&[a,b]\rightarrow \lbrack 0,\infty ),\text{ }f_{y}(x)=f(x,y),\text{ }%
y\in \lbrack c,d] \\
f_{x} &:&[c,d]\rightarrow \lbrack 0,\infty ),\text{ }f_{x}(y)=f(x,y),\text{ }%
x\in \lbrack a,b]
\end{eqnarray*}%
are convex on $[a,b]$ and $[c,d],$ respectively, where $x\in \lbrack a,b],$ $%
y\in \lbrack c,d]$. Similarly;%
\begin{eqnarray*}
g_{y} &:&[a,b]\rightarrow \lbrack 0,\infty ),\text{ }g_{y}(x)=g(x,y),\text{ }%
y\in \lbrack c,d] \\
g_{x} &:&[c,d]\rightarrow \lbrack 0,\infty ),\text{ }g_{x}(y)=g(x,y),\text{ }%
x\in \lbrack a,b]
\end{eqnarray*}%
are $s-$convex on $[a,b]$ and $[c,d],$ respectively, where $x\in \lbrack
a,b],$ $y\in \lbrack c,d].$

Using (\ref{1.2}), we can write%
\begin{eqnarray*}
\frac{1}{d-c}\dint\limits_{c}^{d}f_{x}(y)g_{x}(y)dy &\leq &\frac{1}{s+2}%
\left[ f_{x}(c)g_{x}(c)+f_{x}(d)g_{x}(d)\right] \\
&&+\frac{1}{(s+1)(s+2)}\left[ f_{x}(c)g_{x}(d)+f_{x}(d)g_{x}(c)\right]
\end{eqnarray*}%
That is%
\begin{eqnarray*}
\frac{1}{d-c}\dint\limits_{c}^{d}f(x,y)g(x,y)dy &\leq &\frac{1}{s+2}\left[
f(x,c)g(x,c)+f(x,d)g(x,d)\right] \\
&&+\frac{1}{(s+1)(s+2)}\left[ f(x,c)g(x,d)+f(x,d)g(x,c)\right]
\end{eqnarray*}%
Dividing both sides by$\ (b-a)$ and integrating over $[a,b],$ we get%
\begin{eqnarray}
&&\frac{1}{\left( d-c\right) \left( b-a\right) }\dint\limits_{a}^{b}\dint%
\limits_{c}^{d}f(x,y)g(x,y)dxdy  \label{2.2} \\
&\leq &\frac{1}{s+2}\left[ \frac{1}{b-a}\dint\limits_{a}^{b}f(x,c)g(x,c)dx+%
\frac{1}{b-a}\dint\limits_{a}^{b}f(x,d)g(x,d)dx\right]  \notag \\
&&+\frac{1}{(s+1)(s+2)}\left[ \frac{1}{b-a}\dint%
\limits_{a}^{b}f(x,c)g(x,d)dx+\frac{1}{b-a}\dint\limits_{a}^{b}f(x,d)g(x,c)dx%
\right]  \notag
\end{eqnarray}%
By applying (\ref{1.2}) to each term of right hand side of above inequality,
we have%
\begin{eqnarray*}
\frac{1}{b-a}\dint\limits_{a}^{b}f(x,c)g(x,c)dx &\leq &\frac{1}{s+2}\left[
f(a,c)g(a,c)+f(b,c)g(b,c)\right] \\
&&+\frac{1}{(s+1)(s+2)}\left[ f(a,c)g(b,c)+f(b,c)g(a,c)\right]
\end{eqnarray*}%
\begin{eqnarray*}
\frac{1}{b-a}\dint\limits_{a}^{b}f(x,d)g(x,d)dx &\leq &\frac{1}{s+2}\left[
f(a,d)g(a,d)+f(b,d)g(b,d)\right] \\
&&+\frac{1}{(s+1)(s+2)}\left[ f(a,d)g(b,d)+f(b,d)g(a,d)\right]
\end{eqnarray*}%
\begin{eqnarray*}
\frac{1}{b-a}\dint\limits_{a}^{b}f(x,c)g(x,d)dx &\leq &\frac{1}{s+2}\left[
f(a,c)g(a,d)+f(b,c)g(b,d)\right] \\
&&+\frac{1}{(s+1)(s+2)}\left[ f(a,c)g(b,d)+f(b,c)g(a,d)\right]
\end{eqnarray*}%
\begin{eqnarray*}
\frac{1}{b-a}\dint\limits_{a}^{b}f(x,d)g(x,c)dx &\leq &\frac{1}{s+2}\left[
f(a,d)g(a,c)+f(b,d)g(b,c)\right] \\
&&+\frac{1}{(s+1)(s+2)}\left[ f(a,d)g(b,c)+f(b,d)g(a,c)\right]
\end{eqnarray*}%
Using these inequalities in (\ref{2.2}), (\ref{2.1}) is proved, that is 
\begin{eqnarray*}
&&\frac{1}{\left( d-c\right) \left( b-a\right) }\dint\limits_{a}^{b}\dint%
\limits_{c}^{d}f(x,y)g(x,y)dxdy \\
&\leq &\frac{1}{\left( s+2\right) ^{2}}\left( \left[
f(a,c)g(a,c)+f(b,c)g(b,c)\right] +\left[ f(a,d)g(a,d)+f(b,d)g(b,d)\right]
\right) \\
&&+\frac{1}{(s+1)\left( s+2\right) ^{2}}\left( \left[
f(a,c)g(b,c)+f(b,c)g(a,c)\right] +\left[ f(a,d)g(b,d)+f(b,d)g(a,d)\right]
\right) \\
&&+\frac{1}{(s+1)\left( s+2\right) ^{2}}\left( \left[
f(a,c)g(a,d)+f(b,c)g(b,d)\right] +\left[ f(a,d)g(a,c)+f(b,d)g(b,c)\right]
\right) \\
&&+\frac{1}{(s+1)^{2}(s+2)^{2}}\left( \left[ f(a,c)g(b,d)+f(b,c)g(a,d)\right]
+\left[ f(a,d)g(b,c)+f(b,d)g(a,c)\right] \right)
\end{eqnarray*}%
We can find the same result using by $f_{y}(x)$ and $g_{y}(x).$
\end{proof}

\begin{remark}
In (\ref{2.1}), if we choose $s=1$, (\ref{1.6}) is obtained.
\end{remark}

\begin{remark}
In (\ref{2.1}), if we choose $s=1$ and $f(x)=1$ which is convex, we get the
second inequality in (\ref{1.5}) :%
\begin{equation*}
\frac{1}{\left( d-c\right) \left( b-a\right) }\dint\limits_{a}^{b}\dint%
\limits_{c}^{d}g(x,y)dxdy\leq \frac{g(a,c)+g(b,c)+g(a,d)+g(b,d)}{4}
\end{equation*}
\end{remark}

In the next theorem we will also make use of the Beta function of Euler
type, which is for $x,y>0$ defined as%
\begin{equation*}
B(x,y)=\dint\limits_{0}^{1}t^{x-1}(1-t)^{y-1}dt=\frac{\Gamma (x)\Gamma (y)}{%
\Gamma (x+y)}
\end{equation*}%
and the Gamma function is defined as%
\begin{equation*}
\Gamma (x)=\dint\limits_{0}^{\infty }t^{x-1}e^{-t}dt,\text{ for }x>0.
\end{equation*}

\begin{theorem}
Let $\ f:\Delta :=$ $[a,b]\times \lbrack c,d]\subset \lbrack 0,\infty
)^{2}\rightarrow \lbrack 0,\infty )$ be $s_{1}-$convex function on the
co-ordinates and $g:\Delta :=$ $[a,b]\times \lbrack c,d]\subset \lbrack
0,\infty )^{2}\rightarrow \lbrack 0,\infty )$ be $s_{2}-$convex functions on
the co-ordinates with $a<b,$ $c<d$ and $f_{x}(y)g_{x}(y),$ $%
f_{y}(x)g_{y}(x)\in L^{1}[\Delta ]$ for some fixed $s_{1},s_{2}\in (0,1].$
Then one has the inequality:%
\begin{eqnarray}
&&\frac{1}{\left( d-c\right) \left( b-a\right) }\dint\limits_{a}^{b}\dint%
\limits_{c}^{d}f(x,y)g(x,y)dxdy  \label{2.3} \\
&\leq &\frac{1}{\left( s_{1}+s_{2}+1\right) ^{2}}L(a,b,c,d)+\frac{%
B(s_{1}+1,s_{2}+1)}{s_{1}+s_{2}+1}M(a,b,c,d)  \notag \\
&&+\left[ B(s_{1}+1,s_{2}+1)\right] ^{2}N(a,b,c,d)  \notag \\
&=&\frac{1}{\left( s_{1}+s_{2}+1\right) ^{2}}\left[ L(a,b,c,d)+\frac{%
s_{1}s_{2}\Gamma (s_{1})\Gamma (s_{2})}{\Gamma (s_{1}+s_{2}+1)}%
M(a,b,c,d)\right.  \notag \\
&&\left. +\left[ \frac{s_{1}s_{2}\Gamma (s_{1})\Gamma (s_{2})}{\Gamma
(s_{1}+s_{2}+1)}\right] ^{2}N(a,b,c,d)\right]  \notag
\end{eqnarray}%
where%
\begin{eqnarray*}
L(a,b,c,d) &=&\left[ f(a,c)g(a,c)+f(b,c)g(b,c)+f(a,d)g(a,d)+f(b,d)g(b,d)%
\right] \\
M(a,b,c,d) &=&\left[ f(a,c)g(b,c)+f(b,c)g(a,c)+f(a,d)g(b,d)+f(b,d)g(a,d)%
\right] \\
&&+\left[ f(a,c)g(a,d)+f(b,c)g(b,d)+f(a,d)g(a,c)+f(b,d)g(b,c)\right] \\
N(a,b,c,d) &=&\left[ f(a,c)g(b,d)+f(b,c)g(a,d)+f(a,d)g(b,c)+f(b,d)g(a,c)%
\right]
\end{eqnarray*}
\end{theorem}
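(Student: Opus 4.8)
The plan is to mirror the proof of the preceding theorem, using the product inequality~(\ref{1.3}) for two $s$-convex functions in place of~(\ref{1.2}) at each step. First I would record that, by the $s$-convex analogue of Lemma~2 (the co-ordinate version underlying Lemma~3), since $f$ is $s_1$-convex on the co-ordinates and $g$ is $s_2$-convex on the co-ordinates, the partial mappings $f_x(y)=f(x,y)$ and $f_y(x)=f(x,y)$ are $s_1$-convex on $[c,d]$ and on $[a,b]$ respectively, while $g_x(y)$ and $g_y(x)$ are $s_2$-convex on $[c,d]$ and on $[a,b]$ respectively; the assumption $f_x g_x,\, f_y g_y\in L^1[\Delta]$ makes~(\ref{1.3}) applicable to all the products that arise.

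Next, fixing $x\in[a,b]$ and applying~(\ref{1.3}) to $f_x g_x$ on $[c,d]$, I would obtain
\begin{align*}
\frac{1}{d-c}\int_c^d f(x,y)g(x,y)\,dy &\leq \frac{1}{s_1+s_2+1}\big[f(x,c)g(x,c)+f(x,d)g(x,d)\big] \\
&\quad + B(s_1+1,s_2+1)\big[f(x,c)g(x,d)+f(x,d)g(x,c)\big].
\end{align*}
Dividing by $b-a$ and integrating in $x$ over $[a,b]$ turns the right-hand side into four averages of the type $\frac{1}{b-a}\int_a^b F(x)G(x)\,dx$, where $F\in\{f(\cdot,c),f(\cdot,d)\}$ is $s_1$-convex and $G\in\{g(\cdot,c),g(\cdot,d)\}$ is $s_2$-convex on $[a,b]$. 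Applying~(\ref{1.3}) once more to each of these four averages and substituting back, the diagonal products pick up the factor $\tfrac{1}{(s_1+s_2+1)^2}$ and assemble into $L(a,b,c,d)$, the mixed products pick up $\tfrac{B(s_1+1,s_2+1)}{s_1+s_2+1}$ and assemble into $M(a,b,c,d)$, and the double-cross products pick up $[B(s_1+1,s_2+1)]^2$ and assemble into $N(a,b,c,d)$; this is exactly the first form of~(\ref{2.3}).

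To pass to the Gamma-function form I would then substitute $B(s_1+1,s_2+1)=\dfrac{\Gamma(s_1+1)\Gamma(s_2+1)}{\Gamma(s_1+s_2+2)}=\dfrac{s_1 s_2\,\Gamma(s_1)\Gamma(s_2)}{(s_1+s_2+1)\,\Gamma(s_1+s_2+1)}$, which rewrites $\tfrac{B(s_1+1,s_2+1)}{s_1+s_2+1}$ as $\tfrac{1}{(s_1+s_2+1)^2}\cdot\tfrac{s_1 s_2\Gamma(s_1)\Gamma(s_2)}{\Gamma(s_1+s_2+1)}$ and $[B(s_1+1,s_2+1)]^2$ as $\tfrac{1}{(s_1+s_2+1)^2}\big[\tfrac{s_1 s_2\Gamma(s_1)\Gamma(s_2)}{\Gamma(s_1+s_2+1)}\big]^2$, so that a common $\tfrac{1}{(s_1+s_2+1)^2}$ factors out and yields the bracketed expression. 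Rerunning the argument with $f_y$ and $g_y$ (integrating first in $x$, then in $y$) gives the same estimate.

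I expect the only real difficulty to be the bookkeeping. Each of the two applications of~(\ref{1.3}) splits off diagonal and cross terms, so after both rounds there are twelve product terms to be sorted, and one must verify, for instance, that~(\ref{1.3}) applied to $\frac{1}{b-a}\int_a^b f(x,c)g(x,d)\,dx$ (with $f(\cdot,c)$ as the $s_1$-convex factor and $g(\cdot,d)$ as the $s_2$-convex factor) produces $f(a,c)g(a,d)+f(b,c)g(b,d)$ with weight $\tfrac{1}{s_1+s_2+1}$, landing in $M$, and $f(a,c)g(b,d)+f(b,c)g(a,d)$ with weight $B(s_1+1,s_2+1)$, landing in $N$ — so that every one of the twelve terms ends up in the group claimed.
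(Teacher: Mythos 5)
Your proposal is correct and follows essentially the same route as the paper's own proof: apply~(\ref{1.3}) first to $f_xg_x$ on $[c,d]$, integrate in $x$, apply~(\ref{1.3}) again to each of the four resulting one-variable averages, collect the terms into $L$, $M$, $N$, and rewrite the Beta function via $B(s_1+1,s_2+1)=\frac{s_1s_2\Gamma(s_1)\Gamma(s_2)}{(s_1+s_2+1)\Gamma(s_1+s_2+1)}$. The only quibble is cosmetic: the two rounds of~(\ref{1.3}) produce sixteen product terms (four from each of the four averages), not twelve, but the grouping you describe and the sample term you verify are exactly right.
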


\begin{proof}
Since $f$ is co-ordinated $s_{1}-$convex and $g$ is co-ordinated $s_{2}-$%
convex, from Lemma 2, the partial mappings%
\begin{eqnarray*}
f_{y} &:&[a,b]\rightarrow \lbrack 0,\infty ),\text{ }f_{y}(x)=f(x,y) \\
f_{x} &:&[c,d]\rightarrow \lbrack 0,\infty ),\text{ }f_{x}(y)=f(x,y)
\end{eqnarray*}%
are $s_{1}-$convex on $[a,b]$ and $[c,d],$ respectively, where $x\in \lbrack
a,b],$ $y\in \lbrack c,d].$ Similarly;%
\begin{eqnarray*}
g_{y} &:&[a,b]\rightarrow \lbrack 0,\infty ),\text{ }g_{y}(x)=g(x,y) \\
g_{x} &:&[c,d]\rightarrow \lbrack 0,\infty ),g_{x}(y)=g(x,y)
\end{eqnarray*}%
are $s_{2}-$convex on $[a,b]$ and $[c,d],$ respectively, where $x\in \lbrack
a,b],$ $y\in \lbrack c,d].$

Using (\ref{1.3}), we get%
\begin{eqnarray*}
\frac{1}{d-c}\dint\limits_{c}^{d}f_{x}(y)g_{x}(y)dy &\leq &\frac{1}{%
s_{1}+s_{2}+1}\left[ f_{x}(c)g_{x}(c)+f_{x}(d)g_{x}(d)\right] \\
&&+B(s_{1}+1,s_{2}+1)\left[ f_{x}(c)g_{x}(d)+f_{x}(d)g_{x}(c)\right]
\end{eqnarray*}%
Therefore%
\begin{eqnarray*}
\frac{1}{d-c}\dint\limits_{c}^{d}f(x,y)g(x,y)dy &\leq &\frac{1}{s_{1}+s_{2}+1%
}\left[ f(x,c)g(x,c)+f(x,d)g(x,d)\right] \\
&&+B(s_{1}+1,s_{2}+1)\left[ f(x,c)g(x,d)+f(x,d)g(x,c)\right]
\end{eqnarray*}%
Dividing both sides of the above inequality by $(b-a)$ and integrating over $%
[a,b],$ we have%
\begin{eqnarray}
&&\frac{1}{\left( d-c\right) \left( b-a\right) }\dint\limits_{a}^{b}\dint%
\limits_{c}^{d}f(x,y)g(x,y)dxdy  \label{2.4} \\
&\leq &\frac{1}{s_{1}+s_{2}+1}\left[ \frac{1}{b-a}\dint%
\limits_{a}^{b}f(x,c)g(x,c)dx+\frac{1}{b-a}\dint\limits_{a}^{b}f(x,d)g(x,d)dx%
\right]  \notag \\
&&+B(s_{1}+1,s_{2}+1)\left[ \frac{1}{b-a}\dint\limits_{a}^{b}f(x,c)g(x,d)dx+%
\frac{1}{b-a}\dint\limits_{a}^{b}f(x,d)g(x,c)dx\right]  \notag
\end{eqnarray}%
By applying (\ref{1.3}) to right side of (\ref{2.4}), and we proceed
similarly as in the proof of Theorem 7, we can write%
\begin{eqnarray*}
&&\frac{1}{\left( d-c\right) \left( b-a\right) }\dint\limits_{a}^{b}\dint%
\limits_{c}^{d}f(x,y)g(x,y)dxdy \\
&\leq &\frac{1}{\left( s_{1}+s_{2}+1\right) ^{2}}\left[
f(a,c)g(a,c)+f(b,c)g(b,c)+f(a,d)g(a,d)+f(b,d)g(b,d)\right] \\
&&+\frac{B(s_{1}+1,s_{2}+1)}{s_{1}+s_{2}+1}\left[
f(a,c)g(b,c)+f(b,c)g(a,c)+f(a,d)g(b,d)+f(b,d)g(a,d)\right] \\
&&+\frac{B(s_{1}+1,s_{2}+1)}{s_{1}+s_{2}+1}\left[
f(a,c)g(a,d)+f(b,c)g(b,d)+f(a,d)g(a,c)+f(b,d)g(b,c)\right] \\
&&+\left[ B(s_{1}+1,s_{2}+1)\right] ^{2}\left[
f(a,c)g(b,d)+f(b,c)g(a,d)+f(a,d)g(b,c)+f(b,d)g(a,c)\right]
\end{eqnarray*}%
That is;%
\begin{eqnarray*}
&&\frac{1}{\left( d-c\right) \left( b-a\right) }\dint\limits_{a}^{b}\dint%
\limits_{c}^{d}f(x,y)g(x,y)dxdy \\
&\leq &\frac{1}{\left( s_{1}+s_{2}+1\right) ^{2}}L(a,b,c,d)+\frac{%
B(s_{1}+1,s_{2}+1)}{s_{1}+s_{2}+1}M(a,b,c,d) \\
&&+\left[ B(s_{1}+1,s_{2}+1)\right] ^{2}N(a,b,c,d) \\
&=&\frac{1}{\left( s_{1}+s_{2}+1\right) ^{2}}\left[ L(a,b,c,d)+\frac{%
s_{1}s_{2}\Gamma (s_{1})\Gamma (s_{2})}{\Gamma (s_{1}+s_{2}+1)}%
M(a,b,c,d)\right. \\
&&\left. +\left[ \frac{s_{1}s_{2}\Gamma (s_{1})\Gamma (s_{2})}{\Gamma
(s_{1}+s_{2}+1)}\right] ^{2}N(a,b,c,d)\right]
\end{eqnarray*}%
which completes the proof.
\end{proof}

\begin{remark}
In (\ref{2.3}) if we choose $s_{1}=s_{2}=1,$ (\ref{2.3}) reduces to (\ref%
{1.6}).
\end{remark}

\begin{theorem}
Let $\ f:\Delta :=$ $[a,b]\times \lbrack c,d]\subset \lbrack 0,\infty
)^{2}\rightarrow \lbrack 0,\infty )$ be convex function on the co-ordinates
and $g:\Delta :=$ $[a,b]\times \lbrack c,d]\subset \lbrack 0,\infty
)^{2}\rightarrow \lbrack 0,\infty )$ be $s-$convex function on the
co-ordinates with $a<b,$ $c<d$ and $f_{x}(y)g_{x}(y),$ $f_{y}(x)g_{y}(x)\in
L^{1}[\Delta ]$ for some fixed $s\in (0,1].$ Then one has the inequality:%
\begin{eqnarray}
&&2^{2s+1}f(\frac{a+b}{2},\frac{c+d}{2})g(\frac{a+b}{2},\frac{c+d}{2})
\label{2.5} \\
&\leq &\frac{2}{\left( b-a\right) (d-c)}\dint\limits_{a}^{b}\dint%
\limits_{c}^{d}f(x,y)g(x,y)dxdy  \notag \\
&&+\frac{5}{(s+1)(s+2)^{2}}L(a,b,c,d)+\frac{2s^{2}+6s+6}{(s+1)^{2}(s+2)^{2}}%
M(a,b,c,d)  \notag \\
&&+\frac{2s+6}{(s+1)(s+2)^{2}}N(a,b,c,d)  \notag
\end{eqnarray}
\end{theorem}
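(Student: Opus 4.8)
The plan is to mimic the proof of Theorem 7 but using the one-dimensional inequality (\ref{1.4}) (the midpoint-type bound for the product of a convex and an $s$-convex function) in place of (\ref{1.2}). First I would fix $x\in[a,b]$ and apply (\ref{1.4}) to the partial mappings $f_x,g_x:[c,d]\to[0,\infty)$, which are convex and $s$-convex respectively by Lemma 1 and Lemma 2. This yields
\begin{align*}
&2^{s}f\!\left(x,\tfrac{c+d}{2}\right)g\!\left(x,\tfrac{c+d}{2}\right)-\frac{1}{d-c}\dint\limits_{c}^{d}f(x,y)g(x,y)\,dy\\
&\leq \frac{1}{(s+1)(s+2)}\bigl[f(x,c)g(x,c)+f(x,d)g(x,d)\bigr]+\frac{1}{s+2}\bigl[f(x,c)g(x,d)+f(x,d)g(x,c)\bigr].
\end{align*}
Rearranging and then dividing by $(b-a)$ and integrating over $[a,b]$ gives an intermediate inequality in which the left-hand side contains the term $\frac{2^{s}}{b-a}\dint_a^b f(x,\frac{c+d}{2})g(x,\frac{c+d}{2})\,dx$ and the right-hand side contains $\frac{1}{(b-a)(d-c)}\dint_a^b\dint_c^d fg$ together with four one-dimensional integrals of the form $\frac{1}{b-a}\dint_a^b f(x,c)g(x,c)\,dx$, etc.

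Next I would estimate each of those four boundary integrals from above by (\ref{1.2}) (since for fixed $y\in\{c,d\}$ the mapping $x\mapsto f(x,y)$ is convex and $x\mapsto g(x,y)$ is $s$-convex), exactly as in Theorem 7, producing the $L$, $M$, $N$ combinations of the corner values with coefficients involving $(s+1)$ and $(s+2)$. Simultaneously I must produce a lower bound for the remaining term $\frac{2^{s}}{b-a}\dint_a^b f(x,\frac{c+d}{2})g(x,\frac{c+d}{2})\,dx$: here I apply (\ref{1.4}) once more, this time to the partial mappings $v\mapsto f(v,\frac{c+d}{2})$ and $v\mapsto g(v,\frac{c+d}{2})$ on $[a,b]$, which gives
\[
2^{s}f\!\left(\tfrac{a+b}{2},\tfrac{c+d}{2}\right)g\!\left(\tfrac{a+b}{2},\tfrac{c+d}{2}\right)-\frac{1}{b-a}\dint\limits_{a}^{b}f\!\left(x,\tfrac{c+d}{2}\right)g\!\left(x,\tfrac{c+d}{2}\right)dx\leq (\text{corner terms at } \tfrac{c+d}{2}).
\]
To replace those corner values $f(a,\frac{c+d}{2})g(a,\frac{c+d}{2})$, etc., by the genuine corners $f(a,c),f(a,d),\dots$, I would further bound each such term using co-ordinated convexity/$s$-convexity of $f$ and $g$ in the second variable (e.g. $f(a,\frac{c+d}{2})\le \frac12 f(a,c)+\frac12 f(a,d)$ and the $s$-analogue with factor $2^{-s}$ for $g$), so that $2^{s}f(a,\tfrac{c+d}{2})g(a,\tfrac{c+d}{2})\le \tfrac12\bigl(f(a,c)+f(a,d)\bigr)\bigl(g(a,c)+g(a,d)\bigr)$.

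Then I would assemble everything: multiply the first intermediate inequality through by $2$, substitute the lower bound for the $\frac{2^{s}}{b-a}\dint_a^b f(x,\frac{c+d}{2})g(x,\frac{c+d}{2})\,dx$ term, and collect the coefficients of $L(a,b,c,d)$, $M(a,b,c,d)$, $N(a,b,c,d)$. The main obstacle is purely bookkeeping: keeping track of how the products of the four convex-combination bounds distribute over the nine distinct corner-product types, and verifying that after combining the contribution from (\ref{1.2}) applied to the boundary integrals with the contribution from the two applications of (\ref{1.4}) the totals collapse exactly to the stated coefficients $\frac{5}{(s+1)(s+2)^2}$, $\frac{2s^2+6s+6}{(s+1)^2(s+2)^2}$, and $\frac{2s+6}{(s+1)(s+2)^2}$ — in particular one should double-check that the $2^{2s+1}$ factor on the left emerges correctly from the factor $2$ and the $2^{s}$ inside (\ref{1.4}), and that the cross terms at the midpoint, after being bounded by products of two two-term convex combinations, contribute the right amount to $M$ and $N$. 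A symmetry check (swapping the roles of the two variables, i.e. doing the whole argument starting from the partial mappings $f_y,g_y$) should give the same bound, consistent with the final symmetric expression, and setting $s=1$ should recover (\ref{1.7}) as a sanity check.
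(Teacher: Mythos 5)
Your strategy is sound and each individual step is legitimate, but it is not the paper's route, and the ``bookkeeping'' you defer will not close in the way you anticipate. The paper symmetrizes: it applies (\ref{1.4}) along \emph{both} midlines, adds the two resulting inequalities, and then removes the midline corner terms $2^{s}f(a,\frac{c+d}{2})g(a,\frac{c+d}{2})$, etc., by a \emph{second} application of (\ref{1.4}) in the other variable (which reintroduces one-dimensional boundary integrals, afterwards bounded via (\ref{1.2})). You instead work in a single variable order and dispose of those midline corner terms by the discrete midpoint bounds $f(a,\frac{c+d}{2})\leq\frac{1}{2}(f(a,c)+f(a,d))$ and $g(a,\frac{c+d}{2})\leq 2^{-s}(g(a,c)+g(a,d))$. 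Your substitution is in fact sharper: the paper's two-step bound yields $2^{s}f(a,\frac{c+d}{2})g(a,\frac{c+d}{2})\leq\frac{1}{s+1}\left[ f(a,c)g(a,c)+f(a,d)g(a,d)+f(a,c)g(a,d)+f(a,d)g(a,c)\right] $, whereas yours gives the same bracket with the factor $\frac{1}{2}\leq\frac{1}{s+1}$.

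The concrete consequence is that your coefficients will \emph{not} collapse to the stated ones. Carrying your plan through, the coefficient of $L$ comes out as $\frac{s+4}{(s+1)(s+2)^{2}}$, that of $N$ as $\frac{s^{2}+3s+4}{(s+1)(s+2)^{2}}$, and the eight $M$-type products split into two families with distinct coefficients, namely $\frac{2}{(s+1)^{2}(s+2)^{2}}+\frac{1}{s+2}$ for products such as $f(a,c)g(b,c)$ and $\frac{2}{(s+2)^{2}}+\frac{1}{(s+1)(s+2)}$ for products such as $f(a,c)g(a,d)$. All of these agree with the displayed coefficients at $s=1$ but are strictly smaller for $s\in(0,1)$. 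So to deduce (\ref{2.5}) you must add the small but necessary remark that $f,g\geq 0$ makes every corner product nonnegative, whence your sharper coefficients may be enlarged to the stated ones; an ``exact match'' is not available by your route. The asymmetry between the two $M$-families also shows you have not reproduced the paper's symmetric intermediate bound; averaging your argument with its variable-swapped twin restores the symmetry but still not the printed constants. (For what it is worth, the paper's own computation terminates at $\frac{4s+6}{(s+1)^{2}(s+2)^{2}}L+\cdots$, which likewise differs from the displayed $\frac{5}{(s+1)(s+2)^{2}}L$ when $s<1$, so neither route lands exactly on the statement.)
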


\begin{proof}
Since $f$ is co-ordinated convex and $g$ is co-ordinated $s-$convex, from
Lemma 1 and Lemma 2, the partial mappings%
\begin{eqnarray}
f_{y} &:&[a,b]\rightarrow \lbrack 0,\infty ),\text{ }f_{y}(x)=f(x,y)
\label{2.6} \\
f_{x} &:&[c,d]\rightarrow \lbrack 0,\infty ),\text{ }f_{x}(y)=f(x,y)  \notag
\end{eqnarray}%
are convex on $[a,b]$ and $[c,d],$ respectively, where $x\in \lbrack a,b],$ $%
y\in \lbrack c,d]$. Similarly;%
\begin{eqnarray*}
g_{y} &:&[a,b]\rightarrow \lbrack 0,\infty ),\text{ }g_{y}(x)=g(x,y),\text{ }%
y\in \lbrack c,d] \\
g_{x} &:&[c,d]\rightarrow \lbrack 0,\infty ),\text{ }g_{x}(y)=g(x,y),\text{ }%
x\in \lbrack a,b]
\end{eqnarray*}%
are $s-$convex on $[a,b]$ and $[c,d],$ respectively, where $x\in \lbrack
a,b],$ $y\in \lbrack c,d].$

Using (\ref{1.4}) and multiplying both sides of the inequalities by $2^{s},$
we get 
\begin{eqnarray}
&&2^{2s}f(\frac{a+b}{2},\frac{c+d}{2})g(\frac{a+b}{2},\frac{c+d}{2})
\label{2.7} \\
&&-\frac{2^{s}}{b-a}\dint\limits_{a}^{b}f(x,\frac{c+d}{2})g(x,\frac{c+d}{2}%
)dx  \notag \\
&\leq &\frac{2^{s}}{(s+1)(s+2)}\left[ f(a,\frac{c+d}{2})g(a,\frac{c+d}{2}%
)+f(b,\frac{c+d}{2})g(b,\frac{c+d}{2})\right]  \notag \\
&&+\frac{2^{s}}{s+2}\left[ f(a,\frac{c+d}{2})g(b,\frac{c+d}{2})+f(b,\frac{c+d%
}{2})g(a,\frac{c+d}{2})\right]  \notag
\end{eqnarray}%
and%
\begin{eqnarray}
&&2^{2s}f(\frac{a+b}{2},\frac{c+d}{2})g(\frac{a+b}{2},\frac{c+d}{2})
\label{2.8} \\
&&-\frac{2^{s}}{d-c}\dint\limits_{c}^{d}f(\frac{a+b}{2},y)g(\frac{a+b}{2}%
,y)dy  \notag \\
&\leq &\frac{2^{s}}{(s+1)(s+2)}\left[ f(\frac{a+b}{2},c)g(\frac{a+b}{2},c)+f(%
\frac{a+b}{2},d)g(\frac{a+b}{2},d)\right]  \notag \\
&&+\frac{2^{s}}{s+2}\left[ f(\frac{a+b}{2},c)g(\frac{a+b}{2},d)+f(\frac{a+b}{%
2},d)g(\frac{a+b}{2},c)\right]  \notag
\end{eqnarray}%
Now, on adding (\ref{2.7}) and (\ref{2.8}), we get 
\begin{eqnarray}
&&2^{2s+1}f(\frac{a+b}{2},\frac{c+d}{2})g(\frac{a+b}{2},\frac{c+d}{2})
\label{2.9} \\
&&-\frac{2^{s}}{b-a}\dint\limits_{a}^{b}f(x,\frac{c+d}{2})g(x,\frac{c+d}{2}%
)dx-\frac{2^{s}}{d-c}\dint\limits_{c}^{d}f(\frac{a+b}{2},y)g(\frac{a+b}{2}%
,y)dy  \notag \\
&\leq &\frac{1}{(s+1)(s+2)}\left[ 2^{s}f(a,\frac{c+d}{2})g(a,\frac{c+d}{2}%
)+2^{s}f(b,\frac{c+d}{2})g(b,\frac{c+d}{2})\right]  \notag \\
&&+\frac{1}{s+2}\left[ 2^{s}f(a,\frac{c+d}{2})g(b,\frac{c+d}{2})+2^{s}f(b,%
\frac{c+d}{2})g(a,\frac{c+d}{2})\right]  \notag \\
&&+\frac{1}{(s+1)(s+2)}\left[ 2^{s}f(\frac{a+b}{2},c)g(\frac{a+b}{2}%
,c)+2^{s}f(\frac{a+b}{2},d)g(\frac{a+b}{2},d)\right]  \notag \\
&&+\frac{1}{s+2}\left[ 2^{s}f(\frac{a+b}{2},c)g(\frac{a+b}{2},d)+2^{s}f(%
\frac{a+b}{2},d)g(\frac{a+b}{2},c)\right]  \notag
\end{eqnarray}%
Applying (\ref{1.4}) to each term of right hand side of the above
inequality, we have%
\begin{eqnarray*}
&&2^{s}f(a,\frac{c+d}{2})g(a,\frac{c+d}{2}) \\
&\leq &\frac{1}{d-c}\dint\limits_{c}^{d}f(a,y)g(a,y)dy+\frac{1}{(s+1)(s+2)}%
\left[ f(a,c)g(a,c)+f(a,d)g(a,d)\right] \\
&&+\frac{1}{s+2}\left[ f(a,c)g(a,d)+f(a,d)g(a,c)\right]
\end{eqnarray*}%
\begin{eqnarray*}
&&2^{s}f(b,\frac{c+d}{2})g(b,\frac{c+d}{2}) \\
&\leq &\frac{1}{d-c}\dint\limits_{c}^{d}f(b,y)g(b,y)dy+\frac{1}{(s+1)(s+2)}%
\left[ f(b,c)g(b,c)+f(b,d)g(b,d)\right] \\
&&+\frac{1}{s+2}\left[ f(b,c)g(b,d)+f(b,d)g(b,c)\right]
\end{eqnarray*}%
\begin{eqnarray*}
&&2^{s}f(a,\frac{c+d}{2})g(b,\frac{c+d}{2}) \\
&\leq &\frac{1}{d-c}\dint\limits_{c}^{d}f(a,y)g(b,y)dy+\frac{1}{(s+1)(s+2)}%
\left[ f(a,c)g(b,c)+f(a,d)g(b,d)\right] \\
&&+\frac{1}{s+2}\left[ f(a,c)g(b,d)+f(a,d)g(b,c)\right]
\end{eqnarray*}%
\begin{eqnarray*}
&&2^{s}f(b,\frac{c+d}{2})g(a,\frac{c+d}{2}) \\
&\leq &\frac{1}{d-c}\dint\limits_{c}^{d}f(b,y)g(a,y)dy+\frac{1}{(s+1)(s+2)}%
\left[ f(b,c)g(a,c)+f(b,d)g(a,d)\right] \\
&&+\frac{1}{s+2}\left[ f(b,c)g(a,d)+f(b,d)g(a,c)\right]
\end{eqnarray*}%
\begin{eqnarray*}
&&2^{s}f(\frac{a+b}{2},c)g(\frac{a+b}{2},c) \\
&\leq &\frac{1}{b-a}\dint\limits_{a}^{b}f(x,c)g(x,c)dx+\frac{1}{(s+1)(s+2)}%
\left[ f(a,c)g(a,c)+f(b,c)g(b,c)\right] \\
&&+\frac{1}{s+2}\left[ f(a,c)g(b,c)+f(b,c)g(a,c)\right]
\end{eqnarray*}%
\begin{eqnarray*}
&&2^{s}f(\frac{a+b}{2},d)g(\frac{a+b}{2},d) \\
&\leq &\frac{1}{b-a}\dint\limits_{a}^{b}f(x,d)g(x,d)dx+\frac{1}{(s+1)(s+2)}%
\left[ f(a,d)g(a,d)+f(b,d)g(b,d)\right] \\
&&+\frac{1}{s+2}\left[ f(a,d)g(b,d)+f(b,d)g(a,d)\right]
\end{eqnarray*}%
\begin{eqnarray*}
&&2^{s}f(\frac{a+b}{2},c)g(\frac{a+b}{2},d) \\
&\leq &\frac{1}{b-a}\dint\limits_{a}^{b}f(x,c)g(x,d)dx+\frac{1}{(s+1)(s+2)}%
\left[ f(a,c)g(a,d)+f(b,c)g(b,d)\right] \\
&&+\frac{1}{s+2}\left[ f(a,c)g(b,d)+f(b,c)g(a,d)\right]
\end{eqnarray*}%
\begin{eqnarray*}
&&2^{s}f(\frac{a+b}{2},d)g(\frac{a+b}{2},c) \\
&\leq &\frac{1}{b-a}\dint\limits_{a}^{b}f(x,d)g(x,c)dx+\frac{1}{(s+1)(s+2)}%
\left[ f(a,d)g(a,c)+f(b,d)g(b,c)\right] \\
&&+\frac{1}{s+2}\left[ f(a,d)g(b,c)+f(b,d)g(a,c)\right]
\end{eqnarray*}%
Using these inequalities in (\ref{2.9}), we have%
\begin{eqnarray}
&&2^{2s+1}f(\frac{a+b}{2},\frac{c+d}{2})g(\frac{a+b}{2},\frac{c+d}{2})
\label{2.10} \\
&&-\frac{2^{s}}{b-a}\dint\limits_{a}^{b}f(x,\frac{c+d}{2})g(x,\frac{c+d}{2}%
)dx-\frac{2^{s}}{d-c}\dint\limits_{c}^{d}f(\frac{a+b}{2},y)g(\frac{a+b}{2}%
,y)dy  \notag \\
&\leq &\frac{1}{(s+1)(s+2)}\frac{1}{\left( d-c\right) }\left[
\dint\limits_{c}^{d}f(a,y)g(a,y)dy+\dint\limits_{c}^{d}f(b,y)g(b,y)dy\right]
\notag \\
&&+\frac{1}{\left( s+2\right) }\frac{1}{\left( d-c\right) }\left[
\dint\limits_{c}^{d}f(a,y)g(b,y)dy+\dint\limits_{c}^{d}f(b,y)g(a,y)dy\right]
\notag \\
&&+\frac{1}{(s+1)(s+2)}\frac{1}{\left( b-a\right) }\left[ \dint%
\limits_{a}^{b}f(x,c)g(x,c)dx+\dint\limits_{a}^{b}f(x,d)g(x,d)dx\right] 
\notag \\
&&+\frac{1}{\left( s+2\right) }\frac{1}{\left( b-a\right) }\left[
\dint\limits_{a}^{b}f(x,c)g(x,d)dx+\dint\limits_{a}^{b}f(x,d)g(x,c)dx\right]
\notag \\
&&+\frac{2}{(s+1)^{2}(s+2)^{2}}L(a,b,c,d)+\frac{2}{(s+1)\left( s+2\right)
^{2}}M(a,b,c,d)  \notag \\
&&+\frac{2}{\left( s+2\right) ^{2}}N(a,b,c,d)  \notag
\end{eqnarray}%
Now by applying (\ref{1.4}) to $2^{s}f(\frac{a+b}{2},y)g(\frac{a+b}{2},y),$
integrating over $[c,d]$, and dividing both sides by $(d-c),$ we get 
\begin{eqnarray}
&&\frac{2^{s}}{(d-c)}\dint\limits_{c}^{d}f(\frac{a+b}{2},y)g(\frac{a+b}{2}%
,y)dy  \label{2.11} \\
&&-\frac{1}{\left( b-a\right) (d-c)}\dint\limits_{a}^{b}\dint%
\limits_{c}^{d}f(x,y)g(x,y)dxdy  \notag \\
&\leq &\frac{1}{(s+1)(s+2)}\left[ \frac{1}{(d-c)}\dint%
\limits_{c}^{d}f(a,y)g(a,y)dy+\frac{1}{(d-c)}\dint%
\limits_{c}^{d}f(b,y)g(b,y)dy\right]  \notag \\
&&+\frac{1}{s+2}\left[ \frac{1}{(d-c)}\dint\limits_{c}^{d}f(a,y)g(b,y)dy+%
\frac{1}{(d-c)}\dint\limits_{c}^{d}f(b,y)g(a,y)dy\right]  \notag
\end{eqnarray}%
Similarly by applying (\ref{1.4}) to $2^{s}f(x,\frac{c+d}{2})g(x,\frac{c+d}{2%
}),$ integrating over $[a,b]$, dividing both sides by $(b-a),$ we get%
\begin{eqnarray}
&&\frac{2^{s}}{(b-a)}\dint\limits_{a}^{b}f(x,\frac{c+d}{2})g(x,\frac{c+d}{2}%
)dx  \label{2.12} \\
&&-\frac{1}{\left( b-a\right) (d-c)}\dint\limits_{a}^{b}\dint%
\limits_{c}^{d}f(x,y)g(x,y)dxdy  \notag \\
&\leq &\frac{1}{(s+1)(s+2)}\left[ \frac{1}{\left( b-a\right) }%
\dint\limits_{a}^{b}f(x,c)g(x,c)dx+\frac{1}{\left( b-a\right) }%
\dint\limits_{a}^{b}f(x,d)g(x,d)dx\right]  \notag \\
&&+\frac{1}{s+2}\left[ \frac{1}{\left( b-a\right) }\dint%
\limits_{a}^{b}f(x,c)g(x,d)dx+\frac{1}{\left( b-a\right) }%
\dint\limits_{a}^{b}f(x,d)g(x,c)dx\right]  \notag
\end{eqnarray}%
By addition (\ref{2.11}) and (\ref{2.12}), we have%
\begin{eqnarray}
&&\frac{2^{s}}{(d-c)}\dint\limits_{c}^{d}f(\frac{a+b}{2},y)g(\frac{a+b}{2}%
,y)dy+\frac{2^{s}}{(b-a)}\dint\limits_{a}^{b}f(x,\frac{c+d}{2})g(x,\frac{c+d%
}{2})dx  \notag \\
&&-\frac{2}{\left( b-a\right) (d-c)}\dint\limits_{a}^{b}\dint%
\limits_{c}^{d}f(x,y)g(x,y)dxdy  \label{2.13} \\
&\leq &\frac{1}{(s+1)(s+2)}\left[ \frac{1}{(d-c)}\dint%
\limits_{c}^{d}f(a,y)g(a,y)dy+\frac{1}{(d-c)}\dint%
\limits_{c}^{d}f(b,y)g(b,y)dy\right.  \notag \\
&&\left. +\frac{1}{\left( b-a\right) }\dint\limits_{a}^{b}f(x,c)g(x,c)dx+%
\frac{1}{\left( b-a\right) }\dint\limits_{a}^{b}f(x,d)g(x,d)dx\right]  \notag
\\
&&+\frac{1}{s+2}\left[ \frac{1}{(d-c)}\dint\limits_{c}^{d}f(a,y)g(b,y)dy+%
\frac{1}{(d-c)}\dint\limits_{c}^{d}f(b,y)g(a,y)dy\right.  \notag \\
&&\left. +\frac{1}{\left( b-a\right) }\dint\limits_{a}^{b}f(x,c)g(x,d)dx+%
\frac{1}{\left( b-a\right) }\dint\limits_{a}^{b}f(x,d)g(x,c)dx\right]  \notag
\end{eqnarray}%
From (\ref{2.10}) and (\ref{2.13}) and simplifying we get%
\begin{eqnarray*}
&&2^{2s+1}f(\frac{a+b}{2},\frac{c+d}{2})g(\frac{a+b}{2},\frac{c+d}{2})\leq 
\frac{2}{\left( b-a\right) (d-c)}\dint\limits_{a}^{b}\dint%
\limits_{c}^{d}f(x,y)g(x,y)dx \\
&&+\frac{4s+6}{(s+1)^{2}(s+2)^{2}}L(a,b,c,d)+\frac{2s^{2}+6s+6}{%
(s+1)^{2}(s+2)^{2}}M(a,b,c,d) \\
&&+\frac{2s^{2}+8s+6}{(s+1)^{2}(s+2)^{2}}N(a,b,c,d)
\end{eqnarray*}
\end{proof}

\begin{remark}
In (\ref{2.5}), if we choose $s=1,$ we obtained (\ref{1.7}).
\end{remark}

\begin{remark}
In (\ref{2.5}), if we choose $s=1$ and $f(x)=1$ which is convex, we have the
following Hadamard-type inequality like (\ref{1.5}) 
\begin{eqnarray*}
&&4g(\frac{a+b}{2},\frac{c+d}{2})-\frac{1}{\left( b-a\right) (d-c)}%
\dint\limits_{a}^{b}\dint\limits_{c}^{d}g(x,y)dx \\
&\leq &\frac{3\left[ g(a,c)+g(b,c)+g(a,d)+g(b,d)\right] }{4}
\end{eqnarray*}
\end{remark}

\begin{theorem}
Let $\ f,g:\Delta :=$ $[a,b]\times \lbrack c,d]\subset 
\mathbb{R}
^{2}\rightarrow 
\mathbb{R}
$ be convex function on the co-ordinates with $a<b,$ $c<d$ and $%
f_{x}(y)g_{x}(y),$ $\ f_{y}(x)g_{y}(x)\in L^{1}[\Delta ].$ Then one has the
inequality:%
\begin{eqnarray*}
&&\frac{1}{\left( b-a\right) ^{2}\left( d-c\right) ^{2}}\left[ f\left(
a,c\right) \dint\limits_{a}^{b}\dint\limits_{c}^{d}\left( x-b\right) \left(
y-d\right) g(x,y)dydx\right. \\
&&+f\left( b,c\right) \dint\limits_{a}^{b}\dint\limits_{c}^{d}\left(
a-x\right) \left( y-d\right) g(x,y)dydx+f\left( a,d\right)
\dint\limits_{a}^{b}\dint\limits_{c}^{d}\left( x-b\right) \left( c-y\right)
g(x,y)dydx \\
&&+f\left( b,d\right) \dint\limits_{a}^{b}\dint\limits_{c}^{d}\left(
a-x\right) \left( c-y\right) g(x,y)dydx+g\left( a,c\right)
\dint\limits_{a}^{b}\dint\limits_{c}^{d}\left( x-b\right) \left( y-d\right)
f(x,y)dydx \\
&&+g\left( b,c\right) \dint\limits_{a}^{b}\dint\limits_{c}^{d}\left(
a-x\right) \left( y-d\right) f(x,y)dydx+g\left( a,d\right)
\dint\limits_{a}^{b}\dint\limits_{c}^{d}\left( x-b\right) \left( c-y\right)
f(x,y)dydx \\
&&\left. +g\left( b,d\right) \dint\limits_{a}^{b}\dint\limits_{c}^{d}\left(
a-x\right) \left( c-y\right) f(x,y)dydx\right] \\
&\leq &\frac{1}{\left( b-a\right) (d-c)}\dint\limits_{a}^{b}\dint%
\limits_{c}^{d}f(x,y)g(x,y)dx+\frac{1}{9}L(a,b,c,d)+\frac{1}{18}M(a,b,c,d)+%
\frac{1}{36}N(a,b,c,d)
\end{eqnarray*}%
where $L(a,b,c,d),$ $M(a,b,c,d),$ $N(a,b,c,d)$ defined as in Theorem 6.
\end{theorem}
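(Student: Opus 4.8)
The plan is to compare $f$ and $g$ with their bilinear interpolants at the four vertices of $\Delta$ and to exploit that the difference of each function from its interpolant is nonnegative. For $(x,y)\in\Delta$ set
\begin{equation*}
P_f(x,y)=\frac{(b-x)(d-y)f(a,c)+(x-a)(d-y)f(b,c)+(b-x)(y-c)f(a,d)+(x-a)(y-c)f(b,d)}{(b-a)(d-c)},
\end{equation*}
and define $P_g$ in the same way with $g$ in place of $f$. Since $f$ is convex on the co-ordinates, for fixed $y$ the map $x\mapsto f(x,y)$ is convex on $[a,b]$, so $f(x,y)\le\frac{b-x}{b-a}f(a,y)+\frac{x-a}{b-a}f(b,y)$; bounding $f(a,y)$ and $f(b,y)$ by the one–variable convexity in $y$ and using $\frac{b-x}{b-a},\frac{x-a}{b-a}\ge 0$ gives $f(x,y)\le P_f(x,y)$, and likewise $g\le P_g$ on $\Delta$. (No sign hypothesis on $f$ or $g$ is needed here, only co-ordinate convexity.)

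Consequently $P_f-f\ge 0$ and $P_g-g\ge 0$ on $\Delta$, hence $(P_f-f)(P_g-g)\ge 0$, that is,
\begin{equation*}
P_f(x,y)g(x,y)+f(x,y)P_g(x,y)\le P_f(x,y)P_g(x,y)+f(x,y)g(x,y)\qquad((x,y)\in\Delta).
\end{equation*}
Integrating over $\Delta$ and dividing by $(b-a)(d-c)$ yields
\begin{equation*}
\frac{1}{(b-a)(d-c)}\int_a^b\!\!\int_c^d\!\big(P_fg+fP_g\big)\,dy\,dx\le\frac{1}{(b-a)(d-c)}\int_a^b\!\!\int_c^d\!\big(P_fP_g+fg\big)\,dy\,dx .
\end{equation*}

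It remains to identify the two sides of this inequality with the quantities in the statement. Because $(x-b)(y-d)=(b-x)(d-y)$, $(a-x)(y-d)=(x-a)(d-y)$, $(x-b)(c-y)=(b-x)(y-c)$ and $(a-x)(c-y)=(x-a)(y-c)$, expanding $\frac{1}{(b-a)(d-c)}\int\!\!\int P_fg$ reproduces exactly the four terms of the left-hand side of the theorem carrying $f(a,c),f(b,c),f(a,d),f(b,d)$, and $\frac{1}{(b-a)(d-c)}\int\!\!\int fP_g$ produces the four terms carrying $g(a,c),g(b,c),g(a,d),g(b,d)$; so the left member of the last display is precisely the left-hand side of the claim, and the first summand on the right is $\frac{1}{(b-a)(d-c)}\int\!\!\int fg$. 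Finally one evaluates $\frac{1}{(b-a)(d-c)}\int\!\!\int P_fP_g$: using
\begin{equation*}
\frac{1}{b-a}\int_a^b\!\Big(\tfrac{b-x}{b-a}\Big)^2dx=\frac{1}{b-a}\int_a^b\!\Big(\tfrac{x-a}{b-a}\Big)^2dx=\frac13,\qquad\frac{1}{b-a}\int_a^b\frac{(b-x)(x-a)}{(b-a)^2}dx=\frac16,
\end{equation*}
and their analogues on $[c,d]$, a product $f(u)g(v)$ over vertices $u,v$ of $\Delta$ picks up the weight $\tfrac19$ when $u$ and $v$ agree in both co-ordinates, $\tfrac1{18}$ when they agree in exactly one, and $\tfrac1{36}$ when they differ in both; collecting the sixteen terms gives $\frac19L(a,b,c,d)+\frac1{18}M(a,b,c,d)+\frac1{36}N(a,b,c,d)$, which is the remaining part of the right-hand side. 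Combining the last three displays finishes the proof.

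The work here is essentially bookkeeping: one must keep straight the sixteen vertex–pair products coming from $P_fP_g$ and verify they assemble into $L,M,N$ with the stated coefficients, and match the eight weighted integrals in the statement against the coefficients of $P_f$ and $P_g$. The only genuine idea, the inequality $(P_f-f)(P_g-g)\ge 0$, is immediate once $f\le P_f$ and $g\le P_g$ are established, so I expect no real obstacle beyond the computation just described.
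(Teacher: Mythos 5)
Your proof is correct and is essentially the paper's own argument in different clothing: your bilinear interpolant $P_f(x,y)$ is exactly the right-hand side of the co-ordinated convexity inequality after the substitution $x=ta+(1-t)b$, $y=sc+(1-s)d$, and your key step $(P_f-f)(P_g-g)\ge 0$ is the paper's elementary inequality ``$e\le f$, $p\le r$ imply $er+fp\le ep+fr$.'' The only difference is that you integrate directly over $\Delta$ instead of over $[0,1]^2$ and then changing variables, and you carry out the final weight computation ($\tfrac19,\tfrac1{18},\tfrac1{36}$) that the paper leaves implicit.
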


\begin{proof}
Since $f$ and $g$ are co-ordinated convex functions on the co-ordinates on $%
\Delta $, from the definition of co-ordinated convexity, we can write%
\begin{equation*}
f(ta+(1-t)b,sc+(1-s)d)\leq
tsf(a,c)+t(1-s)f(a,d)+s(1-t)f(b,c)+(1-t)(1-s)f(b,d)
\end{equation*}%
and%
\begin{equation*}
g(ta+(1-t)b,sc+(1-s)d)\leq
tsg(a,c)+t(1-s)g(a,d)+s(1-t)g(b,c)+(1-t)(1-s)g(b,d)
\end{equation*}
holds for all $t,s\in \lbrack 0,1]$. By using the elementary inequality, if $%
e\leq f$ and $p\leq r,$ then $er+fp\leq ep+fr$ for all $e,f,p,r\in 
\mathbb{R}
,$ we get%
\begin{eqnarray*}
&&f(ta+(1-t)b,sc+(1-s)d) \\
&&\times \left[ tsg(a,c)+t(1-s)g(a,d)+s(1-t)g(b,c)+(1-t)(1-s)g(b,d)\right] \\
&&+g(ta+(1-t)b,sc+(1-s)d) \\
&&\times \left[ tsf(a,c)+t(1-s)f(a,d)+s(1-t)f(b,c)+(1-t)(1-s)f(b,d)\right] \\
&\leq &\left[ f(ta+(1-t)b,sc+(1-s)d)g(ta+(1-t)b,sc+(1-s)d)\right] \\
&&+\left[ tsf(a,c)+t(1-s)f(a,d)+s(1-t)f(b,c)+(1-t)(1-s)f(b,d)\right] \\
&&\times \left[ tsg(a,c)+t(1-s)g(a,d)+s(1-t)g(b,c)+(1-t)(1-s)g(b,d)\right] .
\end{eqnarray*}%
By integrating the above integral on $\left[ 0,1\right] \times \left[ 0,1%
\right] $, with respect to $t$, $s$ and by taking into account the change of
variables $ta+(1-t)b=x,$ $(a-b)dt=dx$ and $sc+(1-s)d=y,$ $(c-d)ds=dy$, we
obtain the desired result.
\end{proof}

\end{document}